\renewcommand{\email}[2][]{%
  \ifx\emails\@empty\relax\else{\g@addto@macro\emails{,\space}}\fi%
  \@ifnotempty{#1}{\g@addto@macro\emails{\textrm{(#1)}\space}}%
  \g@addto@macro\emails{#2}%
}
\newcommand{\defi}{\mathrel{\mathop:}=}
\newcommand{\ifed}{=\mathrel{\mathop:}}
\newcommand{\VV}{\mathcal V}
\newcommand{\TT}{\mathcal T}
\newcommand{\dT}{\partial \mathcal  T}
\newcommand{\FF}{\mathcal F}
\newcommand{\FFi}{\mathcal F^{\rm int}}
\newcommand{\FFe}{\mathcal F^{\rm bnd}}
\newcommand{\nn}{\boldsymbol n}
\newcommand{\HH}{\boldsymbol H}
\newcommand{\sig}{\boldsymbol \sigma}
\newcommand{\PP}{\mathbb P}
\newcommand{\grad}{\boldsymbol \nabla}
\renewcommand{\div}{\grad \cdot}
\newcommand{\tH}{\widetilde H}
\newcommand{\jump}[1]{\llbracket #1 \rrbracket}
\newcommand{\ddiv}{\operatorname{div}}
\newcommand{\hu}{\widehat u}
\newcommand{\hv}{\widehat v}
\newcommand{\hU}{\widehat U}
\newcommand{\hSigma}{\widehat \Sigma}
\newtheorem{theorem}{Theorem}
\newtheorem{lemma}[theorem]{Lemma}
\newtheorem{remark}[theorem]{Remark}
\numberwithin{equation}{section}
\numberwithin{theorem}{section}
\newcommand{\pb}{{\rm p}}
\newcommand{\db}{{\rm d}}
\newcommand{\uN}{^{\textsc{n}}}
\newcommand{\us}{^{\textsc{s}}}
\newcommand{\dK}{\partial K}
\newcommand{\calU}{\mathcal{U}}
\newcommand{\calV}{\mathcal{V}}
\newcommand{\uMHM}{^{\textsc{mhm}}}
\newcommand{\uHHO}{^{\textsc{hho}}}
\title[Bridging the MHM and MsHHO methods]{Bridging the Multiscale Hybrid-Mixed and Multiscale Hybrid High-Order methods}
\author{Th\'eophile Chaumont-Frelet$^\dag$}
\address[$\dag$]{Inria, Univ.~C\^ote d'Azur, CNRS, UMR 7351 - Laboratoire J.~A. Dieudonn\'e, F-06000 Nice, France}
\email[$\dag$]{theophile.chaumont@inria.fr}
\author{Alexandre Ern$^\sharp$}
\address[$\sharp$]{CERMICS, \'Ecole des Ponts, F-77455 Marne-la-Vall\'ee Cedex 2 \& Inria Paris, F-75589 Paris, France}
\email[$\sharp$]{alexandre.ern@enpc.fr}
\author{Simon Lemaire$^\flat$}
\address[$\flat$]{Inria, Univ.~Lille, CNRS, UMR 8524 - Laboratoire Paul Painlev\'e, F-59000 Lille, France}
\email[$\flat$]{simon.lemaire@inria.fr}
\author{Fr\'ed\'eric Valentin$^\ddag$}
\address[$\ddag$]{LNCC, Petr\'opolis - RJ, Brazil \& Inria, F-06000 Nice, France}
\email[$\ddag$]{valentin@lncc.br}
\begin{document}

\begin{abstract}
We establish the equivalence between the Multiscale Hybrid-Mixed (MHM)
and the Multiscale Hybrid High-Order (MsHHO) methods for a variable diffusion problem with
piecewise polynomial source term. Under the idealized assumption
that the local problems defining the multiscale basis functions
are exactly solved, we prove that the equivalence holds for general polytopal
(coarse) meshes and arbitrary approximation orders. We also leverage the interchange of properties to perform a unified convergence analysis, as well as to improve on both methods. 
\end{abstract}

\maketitle

\section{Introduction}
\label{intro}

The tremendous development of massively parallel architectures in the last decade has
led to a revision of what is expected from computational simulators, which must embed
asynchronous and communication-avoiding algorithms.
In such a scenario where precision and robustness remain fundamental
properties, but algorithms must take full advantage of the new architectures, numerical methods
built upon the ``divide-and-conquer" philosophy fulfill these requirements better than standard
methods operating in a monolithic fashion on the different scales of the problem at hand. 
Among the vast literature on the subject, driven by domain decomposition methodologies
(see, e.g., \cite{TosWid05} for a survey), multiscale numerical methods emerge as an attractive
option to efficiently handle problems with highly heterogeneous coefficients, as well as multi-query scenarios in which the problem solution must be computed for a large number of source terms. These scenarios may arise when considering highly oscillatory, nonlinear, time-dependent models, or within optimization algorithms when solving problems featuring PDE-based constraints, or in models including stochastic processes, to cite a few.

The development of multiscale methods started with the seminal work \cite{BabOsb83}. Important advances were then provided in~\cite{Hug95,HuFeM:98} (cf.~also~\cite{BreBriFraMalRog92,BreRus94}, and the unifying viewpoint of~\cite{BreFraHugRus97b}) and in \cite{HouWu:97,HouWuCai99}, laying the ground, respectively, for the Variational Multiscale method, and for the Multiscale Finite Element (MsFE) method.
Overall, the common idea behind these multiscale methods is to consider basis functions especially designed so as to
upscale to an overlying coarse mesh the sub-mesh variations of the model. 
Particularly appealing is the fact that the multiscale basis functions are defined by entirely
independent problems. From this viewpoint, multiscale
numerical methods may also be seen as a (non-iterative) domain decomposition technique
\cite{GloWhe88}.
Since the pioneering works on multiscale methods, a large number of improvements and new approaches have been proposed.
In the MsFE context (see~\cite{EfHou:09} for a survey), one can cite the oversampling technique of~\cite{EfHoW:00}, as well as the Petrov--Galerkin variant of~\cite{HouWZ:04} (see also~\cite{AraBarFraVal09}), or the high-order method of~\cite{AlBri:05} (see also~\cite{HesZZ:14}).
  More recent research directions focus on reducing and possibly eliminating the cell resonance error. In this vein, one can cite the Generalized MsFE method~\cite{EfGaH:13}, or the Local Orthogonal Decomposition approach~\cite{HenPe:13,MalPet14}. 
  Hybridization has also been investigated in the pioneering work~\cite{ArPWY:07} on multiscale mortar mixed finite element methods (see also the multiscale mortar multipoint flux mixed finite element method of~\cite{WheXueYot12}). These ideas have been adapted later on in the context of (multiscale) Discontinuous Galerkin methods, leading to the Multiscale Hybridizable Discontinuous Galerkin (MsHDG) method of~\cite{EfeLazShi15} (cf.~also the multiscale Weak Galerkin method of~\cite{MuWaY:16}, devised along the same principles in the spirit of the Generalized MsFE method). Interestingly, this latter approach enables to relax the constraints between the mortar space and the polynomial spaces used in the mesh cells.

Recently, two families of hybrid multiscale numerical methods that are applicable on general meshes have been proposed, namely the Multiscale Hybrid-Mixed (MHM) and the Multiscale Hybrid High-Order (MsHHO)
methods. The MHM method has been first introduced in \cite{HarParVal13}, and further analyzed in
\cite{ArHPV:13,PaVaV:17,BarJaiParVal20} (see also \cite{HarVal16} for an abstract setting), whereas the MsHHO method has
been proposed in \cite{CiErL:19a,CiErL:19b}, as an extension of the HHO method first introduced in \cite{DPELe:14,DPErn:15} (cf.~also \cite{DPELe:16}).
The MHM method relates to the mixed multiscale finite element method proposed in \cite{CheHou02}, as well as to the subgrid upscaling method of \cite{ArbBoy06} (see \cite[Sec.~5.1.2]{HarVal16} for further details). The MsHHO method generalizes to arbitrary polynomial orders the low-order nonconforming multiscale methods of~\cite{LBLLo:13,LBLLo:14}. The polynomial unknowns attached to the mesh interfaces in the MsHHO method play a different role with respect to the (coarse) interface unknowns of the MsHDG method of~\cite{EfeLazShi15}. The fundamental difference between these two approaches is that the MsHDG method is based on local Dirichlet problems (the interface unknowns are then the traces of the solution), whereas the MsHHO method is based on local Neumann problems (the interface unknowns are then the coarse moments of the traces of the solution). Notice that the MHM method is also based on local Neumann problems.
Note that similar ideas have been developed in the conforming framework in the context of BEM-based FEM~\cite{CoLaP:09,Weiss:19}.

The MHM and MsHHO methods substantially differ in their construction. Picking the Poisson equation as an example, the MHM method hinges on
the primal hybrid formulation analyzed in \cite{RavTho77b}. As a consequence, while the local problems are defined as coercive Neumann problems, the global upscaled linear system is of saddle-point type, involving face unknowns that are the normal fluxes through the mesh faces (also the Neumann data for the local problems, up to the sign), plus one  degree of freedom per mesh cell that enforces a local balance between the normal fluxes and the source term. Notice that the (global) saddle-point structure of the MHM method can be  equivalently replaced  by a sequence of positive-definite linear systems as shown recently  in \cite{MadSar21}. On the other hand, the MsHHO method is directly built upon the primal formulation of the problem. As a consequence, the local (Neumann) problems are defined as constrained minimization problems, and as such exhibit a saddle-point structure. On the contrary, the global upscaled linear system is coercive, and only involves face unknowns that are the coarse moments of the traces of the solution at interfaces. Notice that, as opposed to the MHM method, the MsHHO method also uses cell unknowns (that are locally eliminable from the global upscaled linear system), which are associated with basis functions solving local problems with nonzero source terms. As such, the MsHHO method is naturally suited to deal with multi-query scenarios.
  

In this work, we revisit the MHM and MsHHO methods and we prove an equivalence result
between their solutions. Notice that such a relationship is not straightforward since, at first glance,
the two methods exhibit structures that are genuinely different.
Nonetheless, we demonstrate that such an equivalence holds under the assumption
that the source term of the continuous problem is piecewise polynomial (cf.~Theorem~\ref{th:equiv}).
For this equivalence to hold, we make the idealized assumption that the local problems defining the multiscale basis functions are exactly solved. The corresponding methods are then referred to as {\em one-level} (cf.~Remark~\ref{rem:sec_lev} for some insight on the equivalence between two-level methods). Leveraging this equivalence result, the present work also contributes to derive, in a unified fashion, an energy-norm error estimate that is valid for both methods (cf.~Theorem~\ref{th:err.est}). More specifically, 
\begin{itemize}
\item  in the MHM framework, this result is a refined version (especially in the tracking of the dependency with respect to the diffusion coefficient) of the results in~\cite{ArHPV:13};
\item in the MsHHO framework, this result is new and is complementary to the homogenization-based error estimate derived in~\cite{CiErL:19a}.
\end{itemize}
We also explore these stimulating results to transfer properties
proved for one method to the other, and to reveal how the interplay between the methods can
drive advances for both. Notably, we show that
\begin{itemize}
\item the MHM method can be adapted to deal with multi-query scenarios (cf.~Section~\ref{ssse:MHM});
\item  the MsHHO method can be recast as a purely face-based method, in the sense that it can be alternatively defined without using cell unknowns (cf.~Section~\ref{sec:face-based}).
\end{itemize}

The outline of the article is as follows. Section \ref{model} introduces the model problem,
the partition, the notation and a number of useful tools. We present the MHM method in Section
\ref{sec:mhm}, and the MsHHO method in Section \ref{MsHHO-method}. The equivalence
result is stated in Section \ref{equivalence}, along with some further properties and remarks. The energy-norm error estimate is proved in Section~\ref{se:conv}. The solution strategies for both methods are discussed in Section \ref{basis-ddm}, leveraging the equivalence result at hand to propose enhancements for both methods.
Finally, some conclusions are drawn in Section \ref{concl}.

\section{Setting}
\label{model}

In this section, we present the setting, introduce the partition, and define useful broken
spaces on this partition.

\subsection{Model problem}
We consider an open polytopal domain $\Omega\subset\mathbb R^d$, $d=2$ or $3$, with boundary $\partial\Omega$. Given $f:\Omega\to\mathbb{R}$, we seek a function $u:\Omega\to\mathbb{R}$ such that
\begin{equation}
\label{eq_laplace_strong}
\left \{
\begin{alignedat}{2}
-\div(\mathbb{A}\grad u) &= f &\quad&\text{ in $\Omega$}\,,
\\
u &= 0 &\quad&\text{ on $\partial \Omega$}\,.
\end{alignedat}
\right .
\end{equation}
We assume that the diffusion coefficient $\mathbb{A}\in L^\infty(\Omega;\mathbb{R}^{d\times d})$ is symmetric and uniformly elliptic, and that the source term $f$ is in $L^2(\Omega)$.
Problem \eqref{eq_laplace_strong} admits the following weak form:
find $u \in H^1_0(\Omega)$ such that
\begin{equation}
\label{eq_laplace_weak}
(\mathbb{A}\grad u,\grad v)_{\Omega} = (f,v)_{\Omega}\qquad\text{for all $v \in H^1_0(\Omega)$}\,,
\end{equation}
where $(\cdot,\cdot)_{D}$ denotes the $L^2(D;\mathbb{R}^\ell)$, $\ell\in\{1,d\}$, inner product for any measurable set $D\subset\overline{\Omega}$. It is well-established that Problem~\eqref{eq_laplace_weak} admits a unique solution.

\subsection{Partition} \label{sse:part}

The domain $\Omega$ is partitioned into a (coarse) mesh $\TT_H$, that consists of
polytopal (open) cells $K$ with diameter $H_K$, and we set
$H \defi \max_{K \in \TT_H} H_K$. In practice, both the MHM and MsHHO methods consider a fine submesh (characterized by a mesh-size $h\ll H$) to compute the local basis functions, but this finer mesh is not needed in the present discussion since we will assume that the local problems defining the basis functions are exactly solved.
The mesh faces $F$ of $\TT_H$ are collected in the set $\FF_H$, and this set is partitioned into the subset of internal faces (or interfaces) $\FFi_H$ and the subset of boundary faces $\FFe_H$.
The mesh faces are defined to be planar, i.e., every mesh face $F\in\FF_H$ is supported by an affine hyperplane $\mathcal{H}_F$ (recall that the mesh cells have planar faces since they are polytopes).
For an interface $F \in \FFi_H$, we have
\begin{equation}
\label{eq_face_int}
F = \partial K_+ \cap \partial K_-\cap \mathcal{H}_F\,,
\end{equation}
for two cells $K_\pm \in \TT_H$; for a boundary face $F \in \FFe_H$, we have
\begin{equation}
\label{eq_face_ext}
F = \partial K \cap \partial \Omega\cap \mathcal{H}_F\,,
\end{equation}
for one cell $K\in\TT_H$.
We denote by $\partial\TT_H$ the skeleton of the mesh $\TT_H$, defined by $\partial\TT_H\defi\bigcup_{K\in\TT_H}\{\partial K\}$.
Given $K\in\TT_H$, we denote by $\FF_K$ the set of its faces, and by $\nn_K$ the unit outward-pointing vector normal to its boundary (whose restriction to the face $F\in\FF_K$ is the constant vector denoted by $\nn_{K,F}$). We associate with each face $F \in \FF_H$ a unit normal vector $\nn_F$ whose orientation is fixed, with the convention that $\nn_F \defi \nn_{\Omega\mid F}$ if $F \in \FFe_H$, where $\nn_{\Omega}$ is the unit outward-pointing vector normal to $\partial \Omega$.
\begin{remark}[On the notion of face]
  Some minor variations are encountered in the literature regarding the notion
of face in a polytopal mesh, depending on whether the faces are required or not to be
planar, and whether they are genuinely or only loosely defined. In the (polytopal) Discontinuous Galerkin literature~\cite{DiPEr:12,CaDGH:17}, faces are (genuinely) defined by $F=\partial K_+\cap\partial K_-$ (or $F=\partial K\cap\partial\Omega$), thus allowing for nonplanarity. In the HHO literature, faces are always required to be planar, so that one can define a constant normal vector $\nn_F$ to every face $F\in\FF_H$. Variations however exist on how to define them. In the original work~\cite{DPELe:14} on HHO methods, faces are defined loosely by $F\subseteq\partial K_+\cap\partial K_-\cap\mathcal{H}_F$ (or $F\subseteq\partial K\cap\partial\Omega\cap\mathcal{H}_F$); on the contrary, in~\cite[Sec.~1.2.1]{CiErP:21} and in the present work, faces are genuinely defined by $F=\partial K_+\cap\partial K_-\cap\mathcal{H}_F$ (or $F=\partial K\cap\partial\Omega\cap\mathcal{H}_F$). Notice that the latter (genuine) definition, as opposed to the loose one, does not allow for the case of several coplanar faces that would be shared by two cells (or a cell and the boundary). It is however more precise, which is the reason why we have chosen to adopt it in this work. Remark also that, as opposed to the one in~\cite{DPELe:14} (or in \cite[Def.~1.4]{DPDro:20}), the present definition does not require explicitly that faces are connected sets. Of course, the methods we study here are also applicable under the setting of~\cite{DPELe:14}.
\end{remark}

\subsection{Infinite-dimensional broken spaces} \label{sec:infinite_broken}
 
We first define the broken space of piecewise smooth functions on $\TT_H$:
\begin{equation}
H^1(\TT_H) \defi \left \{
v \in L^2(\Omega) \, : \, v_K \in H^1(K) \quad \forall K \in \TT_H
\right \}\,,
\end{equation}
where we let $v_D\defi v_{\mid D}$. For any $v \in H^1(\TT_H)$, we define the jump $\jump{v}_F$ of $v$ across $F \in \FF_H$ by
\begin{equation}
\jump{v}_F \defi 
v_{K_+\mid F}\,(\nn_{K_+,F}\cdot\nn_F) + v_{K_-\mid F}\,(\nn_{K_-,F}\cdot\nn_F)
\end{equation}
if $F \subseteq \partial K_+ \cap \partial K_-$ is an interface, and simply by
\begin{equation}
  \jump{v}_F \defi v_{K\mid F}
\end{equation}
if $F \subseteq \partial K \cap \partial \Omega$ is a boundary face. We also define the broken gradient operator $\grad_H: H^1(\TT_H) \to L^2(\Omega;\mathbb{R}^d)$ such that, for any $v\in H^1(\TT_H)$,
\begin{equation} \label{eq:def_grad_H}
\left (\grad_H v\right )_{\mid K} \defi \grad v_K \quad \text{for all } K \in \TT_H\,.
\end{equation}
We next introduce the space of piecewise smooth functions on $\TT_H$ whose broken (weighted) flux belongs to $\HH(\ddiv,\Omega)$:
\begin{equation}
\VV(\TT_H;\ddiv,\Omega) \defi \left \{
v \in H^1(\TT_H) \, : \, \mathbb{A}\grad_H v \in \HH(\ddiv,\Omega)
\right \}.
\end{equation}
We will see below that the MHM and MsHHO methods produce a discrete
solution that sits in the space $\VV(\TT_H;\ddiv,\Omega)$;
notice that $\VV(\TT_H;\ddiv,\Omega)\subset H^1(\TT_H)\not\subset H^1(\Omega)$.
We now define the two ``skeletal'' spaces
%
\begin{equation}
\Sigma_{0}(\dT_H) \defi \left \{
z\defi (z_{\partial K} )_{K\in\TT_H} \in \prod_{K \in \TT_H}  H^{1/2}(\partial K)
\, \left | \,
\begin{array}{l}
\exists\, w(z) \in H^1_0(\Omega)\text{ s.t.}
\\
z_{\partial K} =w_K(z)_{\mid\partial K}  \; \forall K \in \TT_H
\end{array}
\right .
\right \}\,,
\end{equation}
and
%
\begin{equation}
\Lambda(\dT_H) \defi \left \{
\mu\defi(\mu_{\partial K})_{K\in\TT_H} \in \prod_{K \in \TT_H} H^{-1/2}(\partial K)
\, \left | \,
\begin{array}{l}
\exists \, \sig(\mu) \in \HH(\ddiv,\Omega)\text{ s.t.}
\\
\mu_{\partial K}=\sig_K(\mu)_{\mid\partial K} \cdot \nn_K \; \forall K \in \TT_H
\end{array}
\right .
\right \}\,.
\end{equation}
(Recall that the subscript $K$ refers to the restriction to $K$.)
Letting $\langle \cdot,\cdot \rangle_{\partial K}$ stand for the duality pairing between $H^{-1/2}(\partial K)$ and $H^{1/2}(\partial K)$, we define the following pairing, for all $\mu \in\prod_{K \in \TT_H} H^{-1/2}(\partial K)$ and all $z\in\prod_{K \in \TT_H} H^{1/2}(\partial K)$,
\begin{equation}
\langle \mu,z\rangle_{\dT_H} \defi \sum_{K \in \TT_H} \langle \mu_{\partial K},z_{\partial K} \rangle_{\partial K}\,,
\end{equation}
so that for all $\mu \in \Lambda(\dT_H)$ and all $z\in \Sigma_{0}(\dT_H)$, recalling that $\boldsymbol{\sigma}(\mu)\in\boldsymbol{H}({\rm div},\Omega)$ and $w(z)\in H^1_0(\Omega)$, we have
\begin{equation}\label{zeroflux}
\langle \mu,z\rangle_{\dT_H} = \sum_{K \in \TT_H} \Big( (\div \sig(\mu),w(z))_K + (\sig(\mu),\grad w(z))_K\Big)=0\,.
\end{equation}

\subsection{Finite-dimensional broken spaces} 

Let $q\in\mathbb N$ denote a given polynomial degree.
The space of piecewise ($d$-variate) polynomial functions on $\TT_H$ of total degree up to $q$ is denoted by 
\begin{equation}
\PP^q(\TT_H) \defi \left \{
v \in L^2(\Omega) \; : \; v_K \in \PP^q(K) \quad \forall K \in \TT_H
\right \}\,,
\end{equation}
whereas the space of piecewise ($(d-1)$-variate) polynomial functions on $\FF_H$ of total degree up to $q$ is denoted by
\begin{equation}
\PP^q(\FF_H) \defi\left \{
v \in L^2\left(\bigcup_{F\in\FF_H}F\right) \; : \; v_F \in \PP^q(F) \quad \forall F \in \FF_H
\right \}\,,
\end{equation}
and its subset incorporating homogeneous boundary conditions by
\begin{gather}
\PP^q_0(\FF_H) \defi\left \{v\in\PP^q(\FF_H)\,:\,v_F = 0 \quad\forall F\in\FFe_H\right\}\,.
\end{gather}
For all $K\in\TT_H$, we also define  the local space of piecewise ($(d-1)$-variate) polynomial functions on $\FF_K$ of total degree up to $q$ as follows:
\begin{gather}
\PP^q(\FF_K) \defi\left \{ v \in L^2(\partial K) \; : \; v_F \in \PP^q(F) \quad\forall F \in \FF_K
\right \}\,.
\end{gather}
We consider the following finite-dimensional proper subspace of $\Lambda(\dT_H)$:
\begin{equation}
\Lambda^q(\dT_H)\defi \{\mu\in \Lambda(\dT_H)\, : \,
\mu_{\dK}\in \mathbb{P}^q(\FF_K) \; \forall K\in\TT_H\}\,. 
\end{equation}
Notice that for every interface $F\in\FFi_H$ with $F \subseteq \partial K_+ \cap \partial K_-$, as a consequence of~\eqref{zeroflux}, we
have $\mu_{\partial K_+\mid F} + \mu_{\partial K_-\mid F}=0$ for all $\mu \in \Lambda^q(\dT_H)$.
We also define, for any integer $m\geq 0$, the spaces
\begin{equation} \label{eq:def_calU} 
\left\{ \begin{aligned}
&\calU^{m,q}(K) \defi \left \{
v \in H^1(K) \, : \, \div(\mathbb{A}\grad v) \in \PP^{m}(K), \quad
\mathbb{A}\grad v_{\mid\partial K} \cdot \nn_{K} \in \PP^q(\FF_K)
\right \}\,,\;\forall K\in\TT_H\,, \\
&\calU^{m,q}(\TT_H) \defi \left \{
v \in H^1(\TT_H) \, : \, v_K \in \calU^{m,q}(K) \quad \forall K \in \TT_H\right \}.
\end{aligned}\right.
\end{equation}
To alleviate the notation, we shall drop the superscript $m$ when considering $m = q-1$ for $q\geq 1$, and write $\calU^q(K)$ and $\calU^q(\TT_H)$ in place of $\calU^{q-1,q}(K)$ and $\calU^{q-1,q}(\TT_H)$, respectively.

We finally introduce the space of ``weakly $H^1_0(\Omega)$" functions on $\TT_H$:
\begin{equation}
\tH^{1,q}_0(\TT_H) \defi \left \{
v \in H^1(\TT_H) \, : \, (\jump{v}_F,p)_F= 0 \quad \forall \,p \in \PP^q(F),\; \forall F\in\FF_H\right \}\,.
\end{equation}
Equivalently, we have
\begin{equation} \label{eq:tH1_Lambda}
\tH^{1,q}_0(\TT_H) = \left \{
v \in H^1(\TT_H) \, : \, \langle \mu, v\rangle_{\dT_H}= 0 \quad \forall \mu \in \Lambda^q(\dT_H)\right \}\,.
\end{equation}

\section{The MHM method}
\label{sec:mhm}

%
Let us first set
\begin{equation}\left\{\begin{aligned}
H^1(K)^\perp &\defi \left \{
v \in H^1(K) \, : \, (v,1)_K = 0\right \}\,, \quad \forall K\in\TT_H\,, \\
H^1(\TT_H)^\perp &\defi \left \{
v \in H^1(\TT_H) \, : \,
(v_K,1)_K = 0 \; \forall K \in \TT_H\right \}\,. 
\end{aligned}\right.
\end{equation}
For integers $m,q\in\mathbb{N}$, we also 
define the subspaces $\calU^{m,q}(K)^\perp\defi \{ v\in \calU^{m,q}(K)\, : \, (v,1)_K = 0 \}$ for all $K\in\TT_H$ and $\calU^{m,q}(\TT_H)^\perp\defi 
\{ v\in \calU^{m,q}(\TT_H)\, : \, (v_K,1)_K = 0\; \forall K \in \TT_H \}$.

Let $K\in\TT_H$, and consider the two local operators 
\begin{equation}
T_K\uN: H^{-\frac12}(\partial K) \to H^1(K)^\perp\,, \qquad
T_K\us: L^2(K) \to H^1(K)^\perp\,.
\end{equation} 
For all $\mu_{\dK} \in H^{-\frac12}(\partial K)$
and all $g_K \in L^2(K)$, $T_K\uN(\mu_{\dK})$ and $T_K\us(g_K)$ are the unique elements
in $H^1(K)^\perp$ such that 
\begin{equation}
\label{eq:local_mhm_operators}
\left\{\begin{aligned}
(\mathbb{A}\grad T_K\uN(\mu_{\dK}),\grad v)_{K} &= \langle \mu_{\dK}, v \rangle_{\dK}\,, \\
(\mathbb{A}\grad T_K\us(g_K),\grad v)_{K} &= (g_K,v)_{K} \,,
\end{aligned}\right.
\qquad \forall v \in H^1(K)^\perp\,.
\end{equation}
The superscripts in the operators indicate that $T\uN_K$ lifts a (Neumann) normal flux
and $T\us_K$ lifts a source term. Elementary arguments show that 
\begin{subequations} \label{eq:prop_T}
\begin{alignat}{2}
&-\div (\mathbb{A}\grad T_K\uN (\mu_{\dK}))=-\frac{1}{|K|}\langle\mu_{\dK},1\rangle_{\dK}\;\text{in }K\,,&\quad&\mathbb{A}\grad T_K\uN (\mu_{\dK}) \cdot \nn_K= \mu_{\partial K}\;\text{on }\partial K\,, \label{eq:prop_TuN}\\
&-\div (\mathbb{A}\grad T_K\us (g_K)) = g_K-\frac{1}{|K|}(g_K,1)_K\;\text{in }K\,,&\quad&\mathbb{A}\grad T_K\us (g_K)\cdot\nn_K=0\;\text{on }\partial K\,.\label{eq:prop_Tus}
\end{alignat}
\end{subequations}
It is convenient to define the following global versions of the above lifting operators:
\begin{equation}
T\uN: \Lambda(\dT_H) \to H^1(\TT_H)^\perp\,, \qquad
T\us: L^2(\Omega) \to H^1(\TT_H)^\perp\,.
\end{equation} 
For all $\mu \in \Lambda(\dT_H)$
and all $g \in L^2(\Omega)$, we set 
\begin{equation}
T\uN(\mu)_{\mid K}\defi T\uN_K(\mu_{\dK})\,,\qquad
T\us(g)_{\mid K}\defi T_K\us(g_{K})\,. 
\end{equation}
Equivalently, and recalling the definition~\eqref{eq:def_grad_H} of the 
broken gradient operator, we have
\begin{equation}
\label{eq:global_mhm_operators}
\left\{\begin{aligned}
(\mathbb{A}\grad_H T\uN(\mu),\grad_H v)_{\Omega} &= \langle \mu, v \rangle_{\dT_H}\,, \\
(\mathbb{A}\grad_H T\us(g),\grad_H v)_{\Omega} &= (g,v)_{\Omega} \,,
\end{aligned}\right.
\qquad \forall v \in H^1(\TT_H)^\perp\,,
\end{equation}
which results from summing~\eqref{eq:local_mhm_operators} cell-wise.
We remark that the solution $u\in H^1_0(\Omega)$ to Problem~\eqref{eq_laplace_weak} satisfies
\begin{equation}\label{cont-sol}
  u = u^0 + T\uN(\lambda) + T\us(f) \,,
\end{equation}
where $(u^0,\lambda)\in\mathbb{P}^0(\TT_H)\times\Lambda(\dT_H)$ solve
\begin{subequations}
  \begin{alignat}{2}
    \langle\lambda, v^0\rangle_{\dT_H} &= -(f,v^0)_{\Omega} & \quad &\forall v^0 \in \PP^0(\TT_H)\,,
    \label{cont-a} \\
    \langle\mu, u^0\rangle_{\dT_H} + \langle\mu, T\uN(\lambda)\rangle_{\dT_H}  &= -\langle\mu,T\us(f)\rangle_{\dT_H} & \quad & \forall \mu \in \Lambda(\dT_H)\,.
    \label{cont-b}
  \end{alignat}
\end{subequations}
\noindent
Notice that, owing to~\eqref{eq:global_mhm_operators} and to the fact that $\mathbb{A}$ is symmetric, we have $\langle\mu,T\us(f)\rangle_{\dT_H}=(f,T\uN(\mu))_{\Omega}$.

Let $k\in\mathbb{N}$ be a given polynomial degree.
The  MHM method~\cite{ArHPV:13} reads as follows: Find $(u^0_H,\lambda_H) \in \PP^0(\TT_H) \times \Lambda^k(\dT_H)$ such that
\begin{subequations}\label{mhm}
\begin{alignat}{2}
\langle\lambda_H, v^0_H\rangle_{\dT_H} &= -(f,v^0_H)_{\Omega} & \quad &\forall v^0_H \in \PP^0(\TT_H)\,,
\label{mhm-a} \\
\langle\mu_H, u^0_H\rangle_{\dT_H} + \langle\mu_H, T\uN(\lambda_H)\rangle_{\dT_H}  &= -\langle\mu_H,T\us(f)\rangle_{\dT_H} & \quad & \forall \mu_H \in \Lambda^k(\dT_H)\,,
\label{mhm-b}
\end{alignat}
\end{subequations}
and the  MHM solution is then defined by
\begin{equation}
\label{mhm-sol}
u_H\uMHM \defi u^0_H + T\uN(\lambda_H) + T\us(f) \,.
\end{equation}
The well-posedness of Problem~\eqref{mhm} is established in~\cite[Theorem 3.2]{ArHPV:13}.
Notice that we also have, on the discrete level, $\langle\mu_H,T\us(f)\rangle_{\dT_H}=(f,T\uN(\mu_H))_{\Omega}$.

\begin{lemma}[Characterization of the MHM solution~\eqref{mhm-sol}] \label{lem:semi-mhm}
Let $u_H\uMHM$ be defined by~\eqref{mhm-sol}. Then, \textup{(i)} 
$(\mathbb{A}\grad_H u_H\uMHM\,|_{\dK}) \cdot \nn_K\in\mathbb{P}^k(\FF_K)$ for all $K\in\TT_H$ and $u_H\uMHM \in \tH^{1,k}_0(\TT_H)$; \textup{(ii)} 
$u_H\uMHM \in \calV(\TT_H;\ddiv,\Omega)$ and
$-\div(\mathbb{A}\grad_H u_H\uMHM) = f$ in $\Omega$.
\end{lemma}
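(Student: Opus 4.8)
The plan is to read off both assertions directly from the representation~\eqref{mhm-sol} of $u_H\uMHM$, exploiting the cell-wise properties~\eqref{eq:prop_T} of the lifting operators together with the two equations~\eqref{mhm-a}--\eqref{mhm-b} of the MHM system. Since $u^0_H\in\PP^0(\TT_H)$ has vanishing broken gradient, on each $K\in\TT_H$ we have $\mathbb{A}\grad_H u_H\uMHM{}_{\mid K} = \mathbb{A}\grad T_K\uN(\lambda_{H,\dK}) + \mathbb{A}\grad T_K\us(f_K)$. Taking the normal trace on $\partial K$ and using the boundary relations in~\eqref{eq:prop_TuN} and~\eqref{eq:prop_Tus}, the source contribution drops out and we are left with $(\mathbb{A}\grad_H u_H\uMHM{}_{\mid\dK})\cdot\nn_K = \lambda_{H,\partial K}$. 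Because $\lambda_H\in\Lambda^k(\dT_H)$ we have $\lambda_{H,\dK}\in\PP^k(\FF_K)$, which gives the first part of~(i).

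For the membership $u_H\uMHM\in\tH^{1,k}_0(\TT_H)$, I would use the characterization~\eqref{eq:tH1_Lambda} and show $\langle\mu_H,u_H\uMHM\rangle_{\dT_H}=0$ for every $\mu_H\in\Lambda^k(\dT_H)$. Expanding the pairing over the three terms of~\eqref{mhm-sol} and invoking~\eqref{mhm-b} to rewrite $\langle\mu_H,u^0_H\rangle_{\dT_H}+\langle\mu_H,T\uN(\lambda_H)\rangle_{\dT_H}=-\langle\mu_H,T\us(f)\rangle_{\dT_H}$, the $T\us(f)$ contributions cancel and the pairing vanishes. This completes~(i).

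For~(ii), I would first compute the broken divergence cell-wise. Applying the interior relations in~\eqref{eq:prop_TuN} and~\eqref{eq:prop_Tus}, on each $K$,
\[
-\div(\mathbb{A}\grad_H u_H\uMHM)_{\mid K} = f_K - \frac{1}{|K|}\Big((f_K,1)_K + \langle\lambda_{H,\dK},1\rangle_{\dK}\Big).
\]
The corrective mean term is killed by testing~\eqref{mhm-a} against the admissible function $v^0_H=\mathbf 1_K\in\PP^0(\TT_H)$, which yields $\langle\lambda_{H,\dK},1\rangle_{\dK}=-(f_K,1)_K$; hence $-\div(\mathbb{A}\grad_H u_H\uMHM)=f$ in $\Omega$, cell by cell. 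It then remains to upgrade this to $\mathbb{A}\grad_H u_H\uMHM\in\HH(\ddiv,\Omega)$, i.e.~$u_H\uMHM\in\calV(\TT_H;\ddiv,\Omega)$: since the cell-wise divergence lies in $L^2$, conformity reduces to continuity of the normal flux across each interface. By the first part of~(i) this normal flux equals $\lambda_{H,\partial K_\pm\mid F}$ on the two sides of $F\subseteq\partial K_+\cap\partial K_-$, and $\lambda_H\in\Lambda^k(\dT_H)$ satisfies $\lambda_{H,\partial K_+\mid F}+\lambda_{H,\partial K_-\mid F}=0$ (the flux-consistency relation recorded after the definition of $\Lambda^q(\dT_H)$, itself a consequence of~\eqref{zeroflux}). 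Thus the normal component is single-valued across interfaces and the flux is $\HH(\ddiv,\Omega)$-conforming.

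The computations are short, and the only points requiring care are bookkeeping ones. One must not forget that both liftings carry the zero-mean corrections $\tfrac{1}{|K|}(\cdot,1)_K$, so the identity $-\div(\mathbb{A}\grad_H u_H\uMHM)=f$ is \emph{not} purely local to the operators but genuinely relies on the balance equation~\eqref{mhm-a}; likewise, the $\HH(\ddiv,\Omega)$-conformity is precisely where the interface constraint built into $\Lambda^k(\dT_H)$ (rather than merely the face-wise condition $\lambda_{H,\dK}\in\PP^k(\FF_K)$) is used. I expect no genuine obstacle beyond keeping these two mechanisms straight.
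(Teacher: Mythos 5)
Your proposal is correct and follows essentially the same route as the paper's proof: normal-trace computation via~\eqref{eq:prop_T} giving the flux identity with $\lambda_H$, membership in $\tH^{1,k}_0(\TT_H)$ via~\eqref{eq:tH1_Lambda} and~\eqref{mhm-b}, the cell-wise divergence computation closed by testing~\eqref{mhm-a} on indicator functions, and $\HH(\ddiv,\Omega)$-conformity from the interface cancellation $\lambda_{\partial K_+\mid F}+\lambda_{\partial K_-\mid F}=0$ built into $\Lambda^k(\dT_H)$. No gaps; the two bookkeeping points you flag (the zero-mean corrections and the role of the interface constraint) are exactly the mechanisms the paper relies on.
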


\begin{proof}
By~\eqref{mhm-sol} and~\eqref{eq:prop_T}, we infer that for all $K\in\TT_H$,
\begin{equation}\label{eq:fl}
\mathbb{A}\grad_H u_H\uMHM\,|_{\dK} \cdot \nn_K
= \mathbb{A}\grad T_K\uN(\lambda_{H\mid\dK})\cdot \nn_K
+ \mathbb{A}\grad T_K\us(f_{K})\cdot \nn_K = \lambda_{H\mid\dK} \in \mathbb{P}^k(\FF_K)\,.
\end{equation}
That $u_H\uMHM \in \tH^{1,k}_0(\TT_H)$ follows from the characterization~\eqref{eq:tH1_Lambda}
of $\tH^{1,k}_0(\TT_H)$ and \eqref{mhm-b}. Now, to prove that $u_H\uMHM \in \calV(\TT_H;\ddiv,\Omega)$,
we need to show that $\mathbb{A}\grad_H u_H\uMHM \in \HH(\ddiv,\Omega)$.
Owing to~\eqref{eq:prop_T}, we infer that for all $K\in\TT_H$,
\begin{align}
\div(\mathbb{A}\grad_H u_H\uMHM)_{\mid K} &= \div(\mathbb{A}\grad T_K\uN(\lambda_{H\mid\partial K})) +
\div(\mathbb{A}\grad T_K\us(f_{K})) \nonumber \\
&= \frac{1}{|K|}\langle \lambda_{\dK},1\rangle_{\dK} - f_{K} + \frac{1}{|K|}(f_{K},1)_K = -f_{K}\in L^2(K)\,, \label{eq:identity_mhm}
\end{align}
where the last equality follows from~\eqref{mhm-a}. This shows that 
${\mathbb{A}\grad_H u_H\uMHM}\,|_K \in \HH(\ddiv,K)$ for all $K\in\TT_H$. Moreover, \eqref{eq:fl} shows that $\mathbb{A}\grad_H u_H\uMHM\,|_{\dK}\cdot\nn_K$ can be localized to each face of $K$
and, since for every interface $F \subseteq \partial K_+ \cap \partial K_-$, 
$\lambda_{\partial K_+\mid F} + \lambda_{\partial K_-\mid F}=0$, we infer that $\jump{\mathbb{A}\grad_H u_H\uMHM}_{F}\cdot\nn_F=0$ on $F$.
It results that $\mathbb{A}\grad_H u_H\uMHM \in \HH(\ddiv,\Omega)$. Finally,
$-\div(\mathbb{A}\grad_H u_H\uMHM) = f$ in $\Omega$ follows from~\eqref{eq:identity_mhm} since
$K\in\TT_H$ is arbitrary. 
\end{proof}

Let us take a closer look at the MHM method \eqref{mhm}-\eqref{mhm-sol}.
First, we observe that since $T\uN(\lambda_H)\in \calU^{0,k}(\TT_H)^\perp$,
this function is computable from a finite-dimensional calculation.
The same holds for the right-hand side of \eqref{mhm-b} since 
$\langle\mu_H,T\us(f)\rangle_{\dT_H}=(f,T\uN(\mu_H))_{\Omega}$.
However, the situation is different in \eqref{mhm-sol} for $T\us(f)$. 
One needs indeed to define, so as to fully explicit the (one-level) method, an approximation of this function that is also computable from a finite-dimensional calculation.
For this reason, the original MHM method defined by~\eqref{mhm}-\eqref{mhm-sol} can be viewed as {\em semi-explicit},
whereas a {\em fully explicit} version of it is obtained after approximating $T\us(f)$. Among various possibilities (cf.~Remark~\ref{re:rhs} for an example of an alternative definition),
perhaps the simplest one is to choose an integer $m\ge0$, project $f\in L^2(\Omega)$
onto the finite-dimensional subspace $\PP^m(\TT_H)$, and compute $T\us(\Pi^m_H(f))$, where $\Pi^m_H$ is the $L^2$-orthogonal projector onto $\PP^m(\TT_H)$. This leads to the fully explicit MHM solution
\begin{equation}
\label{mhm-sol-alt}
u_H\uMHM \defi u^0_H + T\uN(\lambda_H) + T\us(\Pi^m_H(f)) \,,
\end{equation}
where the pair
$(u^0_H,\lambda_H) \in \PP^0(\TT_H) \times \Lambda^k(\dT_H)$ now solves
\begin{subequations}\label{mhm-alt}
\begin{alignat}{2}
\langle\lambda_H, v^0_H\rangle_{\dT_H} &= -(f,v^0_H)_{\Omega} & \quad &\forall v^0_H \in \PP^0(\TT_H)\,,
\label{mhm-alt-a} \\
\langle\mu_H, u^0_H\rangle_{\dT_H} + \langle\mu_H, T\uN(\lambda_H)\rangle_{\dT_H}  &= -(\Pi^m_H(f),T\uN(\mu_H))_{\Omega} & \quad & \forall \mu_H \in \Lambda^k(\dT_H)\,.
\label{mhm-alt-b}
\end{alignat}
\end{subequations}
We notice in particular that in~\eqref{mhm-sol-alt} we have
$T\uN(\lambda_H) \in \calU^{0,k}(\TT_H)^\perp\subseteq \calU^{m,k}(\TT_H)^\perp$
and $T\us(\Pi^m_H(f))\in \calU^{m,0}(\TT_H)^\perp\subseteq \calU^{m,k}(\TT_H)^\perp$. Thus, all the quantities involved in~\eqref{mhm-sol-alt}-\eqref{mhm-alt} are members of the space $\calU^{m,k}(\TT_H)$.
Adapting the arguments of the proof of Lemma~\ref{lem:semi-mhm} leads
to the following result.

\begin{lemma}[Characterization of the MHM solution~\eqref{mhm-sol-alt}] \label{lem:fully-mhm}
Let $u_H\uMHM$ be defined by~\eqref{mhm-sol-alt}. Then, \textup{(i)} 
$u_H\uMHM \in \calU^{m,k}(\TT_H)\cap\tH^{1,k}_0(\TT_H)$; \textup{(ii)} 
$u_H\uMHM \in \calV(\TT_H;\ddiv,\Omega)$ and
$-\div(\mathbb{A}\grad_H u_H\uMHM) = \Pi^{m}_H(f)$ in $\Omega$.
\end{lemma}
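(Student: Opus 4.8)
The plan is to mirror the proof of Lemma~\ref{lem:semi-mhm}, replacing the source lifting by $T\us(\Pi^m_H(f))$ throughout and supplementing it with the new containment in $\calU^{m,k}(\TT_H)$. That extra containment is essentially immediate: the discussion preceding the statement already records that $u^0_H\in\PP^0(\TT_H)\subseteq\calU^{m,k}(\TT_H)$, that $T\uN(\lambda_H)\in\calU^{0,k}(\TT_H)^\perp\subseteq\calU^{m,k}(\TT_H)^\perp$, and that $T\us(\Pi^m_H(f))\in\calU^{m,0}(\TT_H)^\perp\subseteq\calU^{m,k}(\TT_H)^\perp$; since $\calU^{m,k}(\TT_H)$ is a linear space, the sum~\eqref{mhm-sol-alt} lies in it. To obtain $u_H\uMHM\in\tH^{1,k}_0(\TT_H)$ I would invoke the characterization~\eqref{eq:tH1_Lambda}: for arbitrary $\mu_H\in\Lambda^k(\dT_H)$, expand $\langle\mu_H,u_H\uMHM\rangle_{\dT_H}$ by linearity into three contributions, use~\eqref{mhm-alt-b} to rewrite the $u^0_H$ and $T\uN(\lambda_H)$ terms as $-(\Pi^m_H(f),T\uN(\mu_H))_{\Omega}$, and then cancel this against the remaining term $\langle\mu_H,T\us(\Pi^m_H(f))\rangle_{\dT_H}$.

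The single step that genuinely goes beyond transcribing the earlier proof, and which I expect to be the principal (if mild) obstacle, is the discrete symmetry identity $\langle\mu_H,T\us(g)\rangle_{\dT_H}=(g,T\uN(\mu_H))_{\Omega}$, valid for any $g\in L^2(\Omega)$; it is needed here because~\eqref{mhm-alt-b} carries the computable right-hand side rather than the pairing form appearing in~\eqref{mhm-b}. I would prove it by testing the first relation in~\eqref{eq:global_mhm_operators} with $v=T\us(g)\in H^1(\TT_H)^\perp$ and the second with $v=T\uN(\mu_H)\in H^1(\TT_H)^\perp$, then equating the two bilinear expressions via the symmetry of $\mathbb{A}$. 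Applied with $g=\Pi^m_H(f)$, this forces $\langle\mu_H,u_H\uMHM\rangle_{\dT_H}=0$ for all $\mu_H\in\Lambda^k(\dT_H)$, yielding the weak-trace membership.

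For part (ii) the normal-flux trace is unchanged from Lemma~\ref{lem:semi-mhm}: since $T\us$ carries zero Neumann data by~\eqref{eq:prop_Tus}, one still has $\mathbb{A}\grad_H u_H\uMHM\,|_{\dK}\cdot\nn_K=\lambda_{H\mid\dK}$, and single-valuedness of $\lambda_H$ across interfaces gives $\jump{\mathbb{A}\grad_H u_H\uMHM}_F\cdot\nn_F=0$. For the interior divergence I would apply~\eqref{eq:prop_T} cell by cell. The only subtlety, replacing the use of~\eqref{mhm-a} in the earlier proof, is that testing~\eqref{mhm-alt-a} with the characteristic function of $K$ gives $\langle\lambda_{H\mid\dK},1\rangle_{\dK}=-(f,1)_K$, while $(f,1)_K=((\Pi^m_H(f))_K,1)_K$ because $1\in\PP^0(K)\subseteq\PP^m(K)$ is fixed by $\Pi^m_H$; the mean-value terms then cancel and leave $\div(\mathbb{A}\grad_H u_H\uMHM)_{\mid K}=-(\Pi^m_H(f))_K\in\PP^m(K)$. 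Together with the flux continuity this places $\mathbb{A}\grad_H u_H\uMHM$ in $\HH(\ddiv,\Omega)$, so $u_H\uMHM\in\calV(\TT_H;\ddiv,\Omega)$ and $-\div(\mathbb{A}\grad_H u_H\uMHM)=\Pi^m_H(f)$ in $\Omega$.
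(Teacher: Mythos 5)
Your proposal is correct and is precisely the adaptation of the proof of Lemma~\ref{lem:semi-mhm} that the paper itself invokes (the paper gives no separate proof, stating only that the arguments of Lemma~\ref{lem:semi-mhm} adapt); your handling of the two genuinely new points --- the discrete symmetry identity $\langle\mu_H,T\us(g)\rangle_{\dT_H}=(g,T\uN(\mu_H))_{\Omega}$, which the paper records just after~\eqref{mhm} via the same symmetry-of-$\mathbb{A}$ argument, and the cancellation $(f,1)_K=(\Pi^m_H(f),1)_K$ since constants are fixed by $\Pi^m_H$ --- matches what the paper intends. The membership $u_H\uMHM\in\calU^{m,k}(\TT_H)$ is likewise argued exactly as in the paragraph preceding the lemma in the paper.
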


\section{The MsHHO method}
\label{MsHHO-method}

Let again $k\in\mathbb{N}$ be a given polynomial degree, and let $m\geq 0$ be an integer. 
The MsHHO method hinges on the following set of discrete unknowns:
\begin{equation}
\hU_H^{m,k} \defi \PP^{m}(\TT_H) \times \PP^k(\FF_H)\,,
\end{equation}
which is composed of cell and face degrees of freedom (one can also consider the case $m=-1$, so that the method is based on face unknowns only; cf.~Remark~\ref{re:mo}). The standard MsHHO method, referred to as mixed-order MsHHO method in~\cite{CiErL:19a}, corresponds to the case $m=k-1$ for $k\geq 1$. 
For all $K \in \TT_H$, we let
$\hv_K\defi(v_{K},v_{\FF_K}) \in \hU_K^{m,k}\defi \PP^{m}(K) \times \PP^k(\FF_K) $ denote the local counterpart of $\hv_H\defi(v_{\TT_H},v_{\FF_H})\in\hU_H^{m,k}$. For all $F\in\FF_H$, $v_F\in \PP^k(F)$ is defined by $v_F\defi v_{\FF_H\mid F}$. Notice that $v_F=v_{\FF_{K_+}\mid F}=v_{\FF_{K_-}\mid F}$ if $F\subseteq\partial K_+\cap\partial K_-$ is an interface, and $v_F=v_{\FF_K\mid F}$ if $F\subseteq\partial K\cap\partial\Omega$ is a boundary face.

The MsHHO method is based on the following local reconstruction operator:
For all $K \in \TT_H$ and all $\hv_K \in \hU_K^{m,k}$, there exists a unique
function $r_K(\hv_K) \in \calU^{m,k}(K)$ (recall that $\calU^{m,k}(K)$ is defined in~\eqref{eq:def_calU}) such that
\begin{subequations} \label{eq:recons} \begin{align} \label{eq:recons.a}
(\mathbb{A}\grad r_K(\hv_K) , \grad w)_K
&=
-(v_K, \div(\mathbb{A}\grad w))_K
+(v_{\FF_K}, \mathbb{A}\grad w \cdot \nn_K)_{\partial K}
\quad \forall w \in \calU^{m,k}(K)\,, \\
\label{eq:recons.b}
(r_K(\hv_K),1)_{\partial K} &= (v_{\FF_K},1)_{\partial K}\,.
\end{align}\end{subequations}
Notice that the usual choice of closure relation for $r_K(\hv_K)$ is $(r_K(\hv_K),1)_K=(v_K,1)_K$.
The operator $r_K$ is the (local) reconstruction operator associated with the finite element
\begin{gather} \label{eq:triple_MsHHO}
\left(K,\,\calU^{m,k}(K),\,\hSigma_K\right)\,,
\end{gather}
with the set of degrees of freedom $\hSigma_K:\calU^{m,k}(K)\to\hU_K^{m,k}$ such that 
$\hSigma_K(v)\defi\left(\Pi^{m}_K(v),\Pi^k_{\FF_K}(v)\right)$ for all $v\in\calU^{m,k}(K)$, where $\Pi^{m}_K$ and $\Pi^k_{\FF_K}$ are the $L^2$-orthogonal projectors onto, respectively, $\PP^{m}(K)$ and $\PP^k(\FF_K)$. For further use, we also define $\Pi^k_F$ to be the $L^2$-orthogonal projector onto $\PP^k(F)$ for all $F\in\FF_H$.
The fact that the triple $(K,\,\calU^{m,k}(K),\,\hSigma_K)$ defines a finite element is a consequence of the fact that the dimensions of $\calU^{m,k}(K)$ and $\hU_K^{m,k}$ coincide, and of the following important property (which states the existence of a right inverse for $\hSigma_K$).

\begin{lemma}[Reconstruction] \label{lem:rec}
The reconstruction operator $r_K$ satisfies
$\hSigma_K(r_K(\hv_K))=\hv_K$ for all $\hv_K \in \hU_K^{m,k}$, i.e.,
\begin{subequations} \label{eq_projection} \begin{alignat}{2}
\label{eq_projection_K}
(r_K(\hv_K),r)_K &= (v_K,r)_K &\quad &\forall r \in \PP^{m}(K)\,, \\
\label{eq_projection_F}
(r_K(\hv_K), q)_{\partial K} &= ( v_{\FF_K}, q)_{\partial K} &\quad &\forall q \in \PP^k(\FF_K)\,.
\end{alignat} \end{subequations}
\end{lemma}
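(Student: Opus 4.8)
The plan is to reduce both moment conditions to a single identity obtained by integrating by parts in the definition of $r_K$. First I would set $\phi \defi r_K(\hv_K) \in \calU^{m,k}(K)$ and, for an arbitrary test function $w \in \calU^{m,k}(K)$, rewrite the left-hand side of~\eqref{eq:recons.a}: since $\mathbb{A}$ is symmetric, $\phi \in H^1(K)$, and $\mathbb{A}\grad w \in \HH(\ddiv,K)$ (because $w\in\calU^{m,k}(K)$), Green's formula yields $(\mathbb{A}\grad\phi,\grad w)_K = -(\phi,\div(\mathbb{A}\grad w))_K + (\phi,\mathbb{A}\grad w\cdot\nn_K)_{\partial K}$. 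Subtracting this from~\eqref{eq:recons.a} would give the master identity
\begin{equation*}
(\phi - v_K,\div(\mathbb{A}\grad w))_K = (\phi - v_{\FF_K},\mathbb{A}\grad w\cdot\nn_K)_{\partial K} \qquad \forall w \in \calU^{m,k}(K)\,.
\end{equation*}

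Next I would identify the range of the linear map $w \mapsto (\div(\mathbb{A}\grad w),\mathbb{A}\grad w\cdot\nn_K)$ acting from $\calU^{m,k}(K)$ into $\PP^m(K)\times\PP^k(\FF_K)$. My claim is that this range is exactly the hyperplane of pairs $(\rho,\theta)$ satisfying $(\rho,1)_K = (\theta,1)_{\partial K}$: the inclusion follows at once from the divergence theorem applied to $\mathbb{A}\grad w$, while for the converse I would, given such a pair, solve the local Neumann problem with volume datum $\rho$ and flux datum $\theta$ — its compatibility condition being precisely $(\rho,1)_K = (\theta,1)_{\partial K}$ — and then check that the resulting $w$ lies in $\calU^{m,k}(K)$ by construction (concretely, $w$ can be assembled from the lifting operators $T_K\uN$ and $T_K\us$ of Section~\ref{sec:mhm}). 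I expect this surjectivity onto the hyperplane to be the crux of the argument.

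Granting this, the linear functional $(\rho,\theta) \mapsto (\phi - v_K,\rho)_K - (\phi - v_{\FF_K},\theta)_{\partial K}$ vanishes, by the master identity, on the hyperplane $\{(\rho,1)_K = (\theta,1)_{\partial K}\}$, hence is a scalar multiple of the (nonzero) constraint functional $(\rho,\theta)\mapsto (\rho,1)_K - (\theta,1)_{\partial K}$. Decoupling the $\rho$ and $\theta$ contributions, I would then read off $\Pi^m_K(\phi) = v_K + \alpha$ and $\Pi^k_{\FF_K}(\phi) = v_{\FF_K} + \alpha$ for one and the same constant $\alpha$. Finally I would invoke the closure relation~\eqref{eq:recons.b}, namely $(\phi,1)_{\partial K} = (v_{\FF_K},1)_{\partial K}$, which says that $\phi - v_{\FF_K}$ has zero mean over $\partial K$ and thus forces $\alpha = 0$; this delivers $\Pi^m_K(\phi) = v_K$ and $\Pi^k_{\FF_K}(\phi) = v_{\FF_K}$, i.e.~\eqref{eq_projection_K} and~\eqref{eq_projection_F}. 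The role of the closure relation is precisely to remove this residual constant $\alpha$, and it is instructive to observe that the face-based normalization~\eqref{eq:recons.b} achieves this just as the usual cell-based one would.
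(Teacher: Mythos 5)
Your proposal is correct and is in substance the paper's own proof: both arguments rest on the same three ingredients, namely integration by parts with the symmetry of $\mathbb{A}$, the solvability in $\calU^{m,k}(K)$ of local Neumann problems with arbitrary compatible polynomial data $(\rho,\theta)\in\PP^{m}(K)\times\PP^k(\FF_K)$ (this is exactly the paper's construction of the test function $\Phi_{r,q}$), and the closure relation \eqref{eq:recons.b} to absorb the one-dimensional compatibility defect. The only difference is organizational — the paper fixes a target pair $(r,q)$, shifts the flux datum by a constant to make it compatible, and verifies the moment identity directly, whereas you characterize the range of the data map $w\mapsto(\div(\mathbb{A}\grad w),\mathbb{A}\grad w\cdot\nn_K)$ as a hyperplane and eliminate the resulting constant $\alpha$ at the end; the role played by \eqref{eq:recons.b} is identical in both.
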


\begin{proof}
We need to prove that
\[
\Theta \defi (r_K(\hv_K)-v_K,r)_K+(r_K(\hv_K)-v_{\FF_K}, q)_{\partial K}=0,
\]
for all $(r,q)\in \hU_K^{m,k}$. Let $\Phi_{r,q}\in \calU^{m,k}(K)$ solve
the following well-posed Neumann problem:
$-\div(\mathbb{A}\grad \Phi_{r,q})=r$ in $K$, and  
$\mathbb{A}\grad \Phi_{r,q\mid\partial K} \cdot \nn_{K} = q'$ on $\partial K$ with
$q'\defi q-\frac{1}{|\partial K|}((r,1)_{K}+(q,1)_{\partial K})$.
We observe that
\begin{align*}
\Theta &= (r_K(\hv_K)-v_K,r)_K+(r_K(\hv_K)-v_{\FF_K}, q')_{\partial K} \\
&=-(r_K(\hv_K)-v_K,\div(\mathbb{A}\grad \Phi_{r,q}))_K+(r_K(\hv_K)-v_{\FF_K},\mathbb{A}\grad \Phi_{r,q\mid\partial K} \cdot \nn_{K})_{\partial K}\\
&=(\mathbb{A}\grad r_K(\hv_K) , \grad \Phi_{r,q})_K+(v_K,\div(\mathbb{A}\grad \Phi_{r,q}))_K
-(v_{\FF_K},\mathbb{A}\grad \Phi_{r,q\mid\partial K} \cdot \nn_{K})_{\partial K} = 0,
\end{align*}
where we used \eqref{eq:recons.b} in the first line, the definition of 
$\Phi_{r,q}$ in the second line, and integration by parts (along with the symmetry of $\mathbb{A}$) together with
\eqref{eq:recons.a} with $w\defi\Phi_{r,q}$ in the third line.
\end{proof}

In the MsHHO method, the essential boundary conditions can be enforced strongly by considering
the subspace
\begin{equation}
\hU_{H,0}^{m,k} \defi\PP^{m}(\TT_H) \times \PP^k_0(\FF_H)\,.
\end{equation}
The MsHHO method for Problem~\eqref{eq_laplace_weak} reads as follows: Find $\hu_H\in\hU_{H,0}^{m,k}$ such that
\begin{equation} \label{mshho}
  \sum_{K \in \TT_H} (\mathbb{A}\grad r_K(\hu_K), \grad r_K(\hv_K))_K
=
\sum_{K \in \TT_H} (f_{K}, v_{K})_K \quad \forall \,\hv_H \in \hU_{H,0}^{m,k}\,.
\end{equation}
The approximate MsHHO solution $u_H\uHHO\in \calU^{m,k}(\TT_H)$ is then defined by
\begin{gather} \label{eq:def_uHHO_1}
u_{H\mid K}\uHHO\defi r_K(\hu_K) \quad\forall K\in\TT_H\,.
\end{gather}
It is easy to see that the function $u_H\uHHO$ defined in~\eqref{eq:def_uHHO_1} actually sits in $\tH^{1,k}_0(\TT_H)$. Indeed, owing to~\eqref{eq_projection_F}, for any interface $F \in \FFi_H$ such that $F \subseteq \partial K_+ \cap \partial K_-$, one has for all $q \in \PP^k(F)$,
\begin{align*}
(\jump{u_H\uHHO}_F,q)_F
&= (r_{K_+}(\hu_{K_+})\,(\nn_{K_+,F}\cdot\nn_F),q)_F
+
(r_{K_-}(\hu_{K_-})\,(\nn_{K_-,F}\cdot\nn_F),q)_F
\\
&= (u_{\FF_{K_+}}\,(\nn_{K_+,F}\cdot\nn_F),q)_F + (u_{\FF_{K_-}}\,(\nn_{K_-,F}\cdot\nn_F),q)_F
\\
&= (u_F\,(\nn_{K_+,F}\cdot\nn_F),q)_F + (u_F\,(\nn_{K_-,F}\cdot\nn_F),q)_F=0\,.
\end{align*}
For boundary faces, one uses again~\eqref{eq_projection_F} along with the fact that $\hu_H\in\hU_{H,0}^{m,k}$.
A crucial observation made in \cite[Remark~5.4]{CiErL:19a}, which is a direct consequence of the finite element property, is that 
the MsHHO method can be equivalently reformulated as follows: Find $u_H\uHHO \in \calU^{m,k}(\TT_H)\cap\tH^{1,k}_0(\TT_H)$ such that
\begin{equation}\label{eq:MsHHO_equiv}
(\mathbb{A} \grad_Hu_H\uHHO,\grad_Hv_H)_\Omega = (\Pi_H^{m}(f),v_H)_\Omega
\quad\forall \,v_H\in\calU^{m,k}(\TT_H)\cap\tH^{1,k}_0(\TT_H)\,,
\end{equation}
where, for any $K\in\TT_H$, ${\Pi^{m}_H(f)}_{\mid K}\defi\Pi^{m}_K(f_K)$.
The existence and uniqueness of $u_H\uHHO$ solution to the square system~\eqref{eq:MsHHO_equiv} 
is straightforward. Indeed,
if $\grad (u_{H\mid K}\uHHO)=\boldsymbol{0}$ in all
$K\in\TT_H$, then $u_H\uHHO\in\mathbb{P}^0(\TT_H)$, and since the moments of $u_H\uHHO$ are single-valued at the mesh interfaces and vanish at the mesh boundary faces, then $u_H\uHHO$ vanishes identically in $\Omega$.

\begin{lemma}[Characterization of the MsHHO solution]
\label{lemma_hho}
Let $u_H\uHHO$ solve~\eqref{eq:MsHHO_equiv}. Then, \textup{(i)}
$u_H\uHHO\in \calU^{m,k}(\TT_H)\cap\tH^{1,k}_0(\TT_H)$; 
\textup{(ii)} $u_H\uHHO \in \calV(\TT_H;\ddiv,\Omega)$ 
and $-\div(\mathbb{A}\grad_H u_H\uHHO) = \Pi^{m}_H(f)$ in $\Omega$.
\end{lemma}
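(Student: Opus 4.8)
The plan is to follow the blueprint of the proof of Lemma~\ref{lem:semi-mhm}, substituting the lifting-operator identities~\eqref{eq:prop_T} with the reconstruction properties of Lemma~\ref{lem:rec}, and extracting the required local information from~\eqref{eq:MsHHO_equiv} by testing against suitably tailored functions. Throughout, I would exploit that $u_{H\mid K}\uHHO=r_K(\hu_K)\in\calU^{m,k}(K)$ for every $K\in\TT_H$.

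Claim~(i) comes essentially for free. Indeed, $r_K(\hu_K)\in\calU^{m,k}(K)$ for every $K$ gives $u_H\uHHO\in\calU^{m,k}(\TT_H)$, while the $\tH^{1,k}_0(\TT_H)$ membership of any function reconstructed from $\hU_{H,0}^{m,k}$ was already verified right after~\eqref{eq:def_uHHO_1}. Moreover, by the very definition~\eqref{eq:def_calU} of $\calU^{m,k}(K)$, one obtains at once that $\div(\mathbb{A}\grad_H u_H\uHHO)_{\mid K}\in\PP^m(K)\subset L^2(K)$ --- hence ${\mathbb{A}\grad_H u_H\uHHO}\,|_K\in\HH(\ddiv,K)$ cell-wise --- and that the normal flux $(\mathbb{A}\grad_H u_H\uHHO\,|_{\dK})\cdot\nn_K\in\PP^k(\FF_K)$ localizes to each face of $K$.

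For Claim~(ii), I would first pin down the cell divergence. Fix $K\in\TT_H$ and $r\in\PP^m(K)$, and take as test function in~\eqref{eq:MsHHO_equiv} the function $v_H$ reconstructed from the degrees of freedom equal to $(r,0)$ on $K$ and to $0$ elsewhere; this $v_H$ lies in $\calU^{m,k}(\TT_H)\cap\tH^{1,k}_0(\TT_H)$ and is supported in $K$. Integrating the left-hand side by parts over $K$, the boundary term vanishes by~\eqref{eq_projection_F} (the face degrees of freedom of $v_H$ are zero), while the volume term reduces by~\eqref{eq_projection_K} to $-(\div(\mathbb{A}\grad r_K(\hu_K)),r)_K$; the right-hand side equals $(\Pi^m_K(f_K),r)_K$, again by~\eqref{eq_projection_K}. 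Since $\div(\mathbb{A}\grad r_K(\hu_K))$ and $\Pi^m_K(f_K)$ both lie in $\PP^m(K)$ and $r\in\PP^m(K)$ is arbitrary, this yields $-\div(\mathbb{A}\grad_H u_H\uHHO)_{\mid K}={\Pi^m_H(f)}_{\mid K}$ in every cell.

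The remaining, and I expect main, obstacle is to upgrade the cell-wise $\HH(\ddiv,K)$ regularity to global $\HH(\ddiv,\Omega)$ membership, i.e.\ to prove that $\jump{\mathbb{A}\grad_H u_H\uHHO}_F\cdot\nn_F=0$ on each interface $F\subseteq\partial K_+\cap\partial K_-$. Writing $\sigma_{K_\pm,F}\defi(\mathbb{A}\grad r_{K_\pm}(\hu_{K_\pm})\cdot\nn_{K_\pm})_{\mid F}\in\PP^k(F)$, this amounts to $\sigma_{K_+,F}+\sigma_{K_-,F}=0$. The crux is to build a test function isolating the single face $F$: given $q\in\PP^k(F)$, I would reconstruct the degrees of freedom with vanishing cell components everywhere and vanishing face components except on $F$, where the value is $q$. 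Since $F$ is interior, these data belong to $\hU_{H,0}^{m,k}$, so the associated $v_H$ is admissible in~\eqref{eq:MsHHO_equiv}, is supported in $K_+\cup K_-$, and carries zero cell degrees of freedom. Inserting it, the right-hand side of~\eqref{eq:MsHHO_equiv} vanishes by~\eqref{eq_projection_K}, as do the volume terms after integration by parts on $K_\pm$, leaving $(\sigma_{K_+,F}+\sigma_{K_-,F},q)_F=0$ by~\eqref{eq_projection_F}. As $q\in\PP^k(F)$ is arbitrary and the localized fluxes lie in $\PP^k(F)$, normal continuity across $F$ follows. Combined with the cell-wise regularity, this gives $\mathbb{A}\grad_H u_H\uHHO\in\HH(\ddiv,\Omega)$, that is $u_H\uHHO\in\calV(\TT_H;\ddiv,\Omega)$, and the cell identity of the previous paragraph then globalizes to $-\div(\mathbb{A}\grad_H u_H\uHHO)=\Pi^m_H(f)$ in $\Omega$.
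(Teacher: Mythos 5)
Your proposal is correct and takes essentially the same approach as the paper: both test the variational problem against reconstructed functions, integrate by parts, and use the projection properties of Lemma~\ref{lem:rec} to extract the cell-wise identity $-\div(\mathbb{A}\grad_H u_H\uHHO)_{\mid K}=\Pi^m_K(f_K)$ and the vanishing of $\jump{\mathbb{A}\grad_H u_H\uHHO}_F\cdot\nn_F$ on interfaces. The only difference is organizational: the paper first establishes the single global identity~\eqref{form_flux} valid for all test degrees of freedom and then localizes, whereas you plug in the two families of localized test functions (cell-supported and interface-supported) from the start.
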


\begin{proof}
We have already shown above that $u_H\uHHO\in \calU^{m,k}(\TT_H)\cap\tH^{1,k}_0(\TT_H)$.
Let us now show that $\mathbb{A}\grad_H u_H\uHHO\in \HH(\ddiv,\Omega)$.
Since $u_H\uHHO \in \calU^{m,k}(\TT_H)$, we already know that
$\div(\mathbb{A}\grad_H u_H\uHHO)_{\mid K} \in \PP^{m}(K)\subset L^2(K)$ and
${\mathbb{A}\grad_H u_H\uHHO}\,|_{\partial K} \cdot \nn_K \in \PP^k(\FF_K)$ for all
$K\in\TT_H$. Moreover, owing to~\eqref{mshho}, \eqref{eq:def_uHHO_1}, and the definition~\eqref{eq:recons}, we infer that 
\begin{equation} \label{form_flux}
-\sum_{K\in \TT_H} (\div(\mathbb{A}\grad_H u_H\uHHO),v_K)_K + \sum_{F \in \FFi_H} (\jump{\mathbb{A}\grad_H u_H\uHHO}_F \cdot \nn_F,v_F)_F
=
\sum_{K \in \TT_H}(\Pi^{m}_K(f_K), v_K)_K\,,
\end{equation}
for all $v_K\in\PP^{m}(K)$ and all $K\in\TT_H$, and for all $v_F\in\PP^k(F)$ and all $F\in\FFi_H$ (notice that
we have used that $v_F = 0$ for all $F\in\FFe_H$ for $\hv_H\in\hU_{H,0}^{m,k}$). This readily implies that 
\begin{gather*}
-{\div(\mathbb{A}\grad_H u_{H}\uHHO)}_{\mid K}=\Pi^{m}_K(f_K)\quad\text{for all $K\in\TT_H$}\,,
\end{gather*}
and that
\begin{equation*} 
  \jump{\mathbb{A}\grad_H u_H\uHHO}_F \cdot \nn_F = 0\quad\text{for all $F \in \FFi_H$}\,.
\end{equation*}
It follows that
$\mathbb{A}\grad_H u_H\uHHO \in \HH(\ddiv,\Omega)$
and that
$-\div (\mathbb{A}\grad_H u_H\uHHO) = \Pi^{m}_H(f)$ in $\Omega$.
\end{proof}

\section{Main equivalence result and further comments}
\label{equivalence}

The following result, which is a consequence of Lemma~\ref{lem:semi-mhm}, Lemma~\ref{lem:fully-mhm},
and Lemma~\ref{lemma_hho}, summarizes our main result
on the equivalence between the MHM and MsHHO methods.

\begin{theorem}[Equivalence between MHM and MsHHO] \label{th:equiv}
Let $m,k\in\mathbb{N}$. The following holds true:
\\
\textup{(i)} Let $u_H\uMHM$ be the (original, semi-explicit) MHM solution defined by~\eqref{mhm-sol} using $k\ge0$.
Let $u_H\uHHO$ be the MsHHO solution solving~\eqref{eq:MsHHO_equiv} using 
$m,k\ge0$. Then,
$u_H\uMHM=u_H\uHHO$ if $f \in \PP^{m}(\TT_H)$. 
\\
\textup{(ii)} Let $u_H\uMHM$ be the (fully explicit) MHM solution
defined by~\eqref{mhm-sol-alt} using $m,k\ge0$.
Let $u_H\uHHO$ be the MsHHO solution solving~\eqref{eq:MsHHO_equiv} using $m,k\ge0$. Then,
$u_H\uMHM=u_H\uHHO$ for all $f\in L^2(\Omega)$.
\end{theorem}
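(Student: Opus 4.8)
The plan is, in each case, to verify that the relevant MHM solution lies in $\calU^{m,k}(\TT_H)\cap\tH^{1,k}_0(\TT_H)$ and solves the reformulated MsHHO problem~\eqref{eq:MsHHO_equiv}, and then to conclude from the uniqueness of the solution to that square system (recorded just before Lemma~\ref{lemma_hho}) that it coincides with $u_H\uHHO$. The three characterization lemmas supply exactly the ingredients needed, so the only genuine work is a single integration-by-parts identity.

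I would start with Part~(ii). Lemma~\ref{lem:fully-mhm} already places $u_H\uMHM$ (defined by~\eqref{mhm-sol-alt}) in $\calU^{m,k}(\TT_H)\cap\tH^{1,k}_0(\TT_H)$, the very space in which~\eqref{eq:MsHHO_equiv} is posed, and guarantees $\mathbb{A}\grad_H u_H\uMHM\in\HH(\ddiv,\Omega)$ with $-\div(\mathbb{A}\grad_H u_H\uMHM)=\Pi^{m}_H(f)$. Setting $\sig\defi\mathbb{A}\grad_H u_H\uMHM$ and integrating by parts cell by cell against an arbitrary $v_H\in\calU^{m,k}(\TT_H)\cap\tH^{1,k}_0(\TT_H)$, I would write
\[
(\sig,\grad_H v_H)_{\Omega}=\sum_{K\in\TT_H}\Big(-(\div\sig_{\mid K},v_{H\mid K})_K+(\sig_{\mid K}\cdot\nn_K,v_{H\mid K})_{\dK}\Big).
\]
The cell sum yields the right-hand side $(\Pi^{m}_H(f),v_H)_{\Omega}$ of~\eqref{eq:MsHHO_equiv}: since $\div\sig_{\mid K}=-\Pi^{m}_K(f_K)\in\PP^{m}(K)$, the pairing with $v_{H\mid K}$ is unaffected by $v_H$ not being piecewise polynomial. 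The boundary sum is the crux, and I would reassemble it face by face. On an interface $F\subseteq\partial K_+\cap\partial K_-$, the $\HH(\ddiv,\Omega)$-conformity gives $\jump{\sig}_F\cdot\nn_F=0$, so the two opposite cell contributions combine into $(\sig\cdot\nn_F,\jump{v_H}_F)_F$; moreover $\sig_{\mid K}\cdot\nn_K\in\PP^k(\FF_K)$ (because $u_H\uMHM\in\calU^{m,k}(\TT_H)$), so $\sig\cdot\nn_F$ restricted to $F$ is a genuine element of $\PP^k(F)$. The defining condition of $\tH^{1,k}_0(\TT_H)$, namely $(\jump{v_H}_F,p)_F=0$ for all $p\in\PP^k(F)$, then annihilates the term; the boundary faces $F\in\FFe_H$ are handled identically. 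Hence $u_H\uMHM$ solves~\eqref{eq:MsHHO_equiv}, and uniqueness gives $u_H\uMHM=u_H\uHHO$ for every $f\in L^2(\Omega)$.

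For Part~(i), the hypothesis $f\in\PP^{m}(\TT_H)$ gives $\Pi^{m}_H(f)=f$, so using $\langle\mu_H,T\us(f)\rangle_{\dT_H}=(f,T\uN(\mu_H))_{\Omega}$ one sees that systems~\eqref{mhm} and~\eqref{mhm-alt} coincide, whence the semi-explicit solution~\eqref{mhm-sol} and the fully explicit solution~\eqref{mhm-sol-alt} are identical, and Part~(i) reduces to Part~(ii). Alternatively, and more directly, Lemma~\ref{lem:semi-mhm} already yields $u_H\uMHM\in\tH^{1,k}_0(\TT_H)$, $\mathbb{A}\grad_H u_H\uMHM\in\HH(\ddiv,\Omega)$, $(\mathbb{A}\grad_H u_H\uMHM\,|_{\dK})\cdot\nn_K\in\PP^k(\FF_K)$, and $-\div(\mathbb{A}\grad_H u_H\uMHM)=f$; since now $f\in\PP^{m}(\TT_H)$, the divergence is cellwise in $\PP^{m}(K)$, which by the definition~\eqref{eq:def_calU} of $\calU^{m,k}(K)$ upgrades the membership to $u_H\uMHM\in\calU^{m,k}(\TT_H)$, and the integration-by-parts argument above applies verbatim with $\Pi^{m}_H(f)=f$.

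The main obstacle is the reassembly of the boundary term in the integration by parts. It relies simultaneously on three facts, all furnished by the characterization lemmas: the $\HH(\ddiv,\Omega)$-conformity, which recombines the opposite cell fluxes into a single jump pairing against $v_H$; the fact that the interface normal flux is an honest degree-$k$ polynomial, so that it is an admissible test function for the weak-continuity condition defining $\tH^{1,k}_0(\TT_H)$; and the correct handling of the normal-orientation conventions entering the definition of $\jump{\cdot}_F$. Once this bookkeeping is in place, the equivalence is immediate from the uniqueness of the solution to~\eqref{eq:MsHHO_equiv}.
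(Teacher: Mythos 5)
Your proof is correct and follows exactly the route the paper intends: Theorem~\ref{th:equiv} is stated there as a direct consequence of Lemmas~\ref{lem:semi-mhm}, \ref{lem:fully-mhm} and~\ref{lemma_hho}, and your argument simply fleshes out that implication by showing the MHM solution satisfies~\eqref{eq:MsHHO_equiv} via cell-wise integration by parts and face reassembly, then invoking the uniqueness of the solution to that square system. The reduction of Part~(i) to Part~(ii) through $\Pi^m_H(f)=f$ is likewise exactly the observation underlying the paper's statement.
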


We now collect several remarks providing further insight into the above equivalence result.

\begin{remark}[Comparison of heuristic viewpoints] \label{re:heu}
It is possible 
to sketch the two complementary visions behind the fully explicit MHM
and MsHHO methods. In the (fully explicit) MHM method, the general idea is 
to search for an approximate solution $u_H$ among the members of the affine functional space
\begin{gather*}
\left\{v_H\in\VV(\TT_H;\ddiv,\Omega)\cap\calU^{m,k}(\TT_H)\,:\,-\div(\mathbb{A}\grad_H v_H)=\Pi^m_H(f)\;\text{in }\Omega \right\}\,,
\end{gather*}
and to enforce that $u_H\in\tH^{1,k}_0(\TT_H)$ by requiring that 
\begin{equation*} 
\langle \mu_H, u_H\rangle_{\dT_H}=0\quad\text{for all }\mu_H\in\Lambda^{k}(\dT_H)\,.
\end{equation*}
In the MsHHO method, the general idea is to search for an approximate solution among the members of the affine functional space
\begin{gather*}
\left\{v_H\in \tH^{1,k}_0(\TT_H)\cap\calU^{m,k}(\TT_H)\,:\,-\div(\mathbb{A}\grad (v_{H\mid K}))=\Pi^m_K(f_K)\text{ in $K$\; $\forall\,K\in\TT_H$}\right\}\,,
\end{gather*}
and to enforce that $u_H\in\VV(\TT_H;\ddiv,\Omega)$ by requiring that
\begin{equation*} 
\langle\mathbb{A}\grad_Hu_H\cdot\nn,q_H\rangle_{\dT_H}=0\quad\text{for all }q_H\in\mathbb{P}^k_0(\FF_H)\,.
\end{equation*}
\end{remark}

\begin{remark}[Modification of the right-hand side] \label{re:rhs}
It is observed in \cite[Remark~5.8]{CiErL:19a} that a variant of the MsHHO method is obtained by searching
$u_H\uHHO \in \calU^{m,k}(\TT_H)\cap\tH^{1,k}_0(\TT_H)$ such that
\begin{equation}\label{eq:MsHHO_modif}
(\mathbb{A} \grad_Hu_H\uHHO,\grad_Hv_H)_\Omega = (f,v_H)_\Omega
\quad\forall \,v_H\in \calU^{m,k}(\TT_H)\cap\tH^{1,k}_0(\TT_H)\,.
\end{equation}
One advantage of~\eqref{eq:MsHHO_modif} is that the source term $f$ is now seen through its 
$L^2$-orthogonal projection onto $\calU^{m,k}(\TT_H)$ instead of its projection onto the smaller space 
$\PP^{m}(\TT_H)$ as in~\eqref{eq:MsHHO_equiv}. However, if $u_H\uHHO$ solves~\eqref{eq:MsHHO_modif},
$\mathbb{A}\grad_Hu_H\uHHO$ slightly departs from $\HH(\ddiv,\Omega)$, i.e., we no longer 
have $u_H\uHHO\in \calV(\TT_H;\ddiv,\Omega)$ as for the solution to~\eqref{eq:MsHHO_equiv}. This modified MsHHO solution can be bridged to the fully explicit MHM
solution obtained by approximating the lifting $T\us$ by the operator $T\us_H:L^2(\Omega) \to \calU^{m,k}(\TT_H)^\perp$ such that, for all $g\in L^2(\Omega)$, $T\us_H(g)\in\calU^{m,k}(\TT_H)^\perp$ solves
\begin{equation*}
(\mathbb{A}\grad_H T\us_H(g),\grad_H v)_\Omega = (g,v),\qquad \forall v\in \calU^{m,k}(\TT_H)^\perp\,.
\end{equation*}
Indeed, the modified MsHHO solution solving~\eqref{eq:MsHHO_modif} coincides with 
the fully explicit MHM solution 
\begin{equation*}
u_H\uMHM \defi u^0_H + T\uN(\lambda_H) + T\us_H(f) \,,
\end{equation*}
where $(u^0_H,\lambda_H) \in \PP^0(\TT_H) \times \Lambda^k(\dT_H)$ now solve
\begin{alignat*}{2}
\langle\lambda_H, v^0_H\rangle_{\dT_H} &= -(f,v^0_H)_{\Omega} & \quad &\forall v^0_H \in \PP^0(\TT_H)\,, \\
\langle\mu_H, u^0_H\rangle_{\dT_H} + \langle\mu_H, T\uN(\lambda_H)\rangle_{\dT_H}  &= -\langle\mu_H,T\us_H(f)\rangle_{\dT_h} & \quad & \forall \mu_H \in \Lambda^k(\dT_H)\,.
\end{alignat*}
\end{remark}

\begin{remark}[Variant with no cell unknowns (case $m=-1$)] \label{re:mo}
It is possible to consider the case $m=-1$ in the above MHM and MsHHO settings, leading to an MsHHO formulation without cell unknowns. 
The spaces $\calU^{m,q}(K)$ and $\calU^{m,q}(\TT_H)$ can still be defined by~\eqref{eq:def_calU} when $m=-1$, with the convention that $\PP^{-1}(K)\defi\{0\}$. 
The fully explicit MHM method is still defined as in Section~\ref{sec:mhm}. 
The only modification in the analysis is that the last statement in
Lemma~\ref{lem:fully-mhm} now becomes
$-\div(\mathbb{A}\grad_H u_H\uMHM) = \Pi^{0}_H(f)$ in $\Omega$. 
Notice also that \eqref{mhm-sol-alt} becomes $u_H\uMHM=u_H^0+T\uN(\lambda_H)$.
Actually, since $T\us(c_H)=0$ for any $c_H\in\mathbb{P}^0(\TT_H)$ owing to~\eqref{eq:prop_Tus}, we infer that the (fully explicit) MHM method for $m=-1$ coincides with the (fully explicit) MHM method for $m=0$.
Concerning the MsHHO method, the variant~\eqref{eq:MsHHO_modif} has to be adopted in the case $m=-1$.
Finally, we observe that in the case $m=-1$, the MHM and MsHHO solutions do not 
coincide.
\end{remark}

\section{Unified convergence analysis} \label{se:conv}

We derive, in a unified fashion, an energy-norm error estimate that is valid for both the (fully explicit) MHM and MsHHO methods.

\subsection{Setting} \label{sse:setting}

Let $\TT_H$ be a given (coarse) polytopal mesh of the domain $\Omega$ in the sense of Section~\ref{sse:part}. Since we are interested in deriving a quantitative estimate on the discretization error for the MHM/MsHHO methods, we need to define a measure of regularity for the mesh at hand. To do so, following~\cite[Sec.~2.1.1]{CiErP:21}, we assume that the mesh $\TT_H$ admits a matching simplicial submesh $\mathcal{S}_H$, and that there exists some real parameter $0<\rho_H<1$ such that, for all $K\in\TT_H$, and all $T\in\mathcal{S}_H$ such that $T\subseteq K$, \begin{inparaenum}[(i)] \item $\rho_HH_T\leq R_T$ where $R_T$ denotes the inradius of the simplex $T$, and \item $\rho_HH_K\leq H_T$. \end{inparaenum} The parameter $\rho_H$ measures the regularity of the mesh $\TT_H$. When studying a convergence process in which the meshes of some given sequence $(\TT_H)_{H\in\mathcal{H}}$ are successively refined, we shall assume that the mesh sequence $(\TT_H)_{H\in\mathcal{H}}$ is uniformly regular, in the sense that there exists $0<\rho<1$ such that, for all $H\in\mathcal{H}$, $\rho\leq\rho_H$. Standard local Poincar\'e--Steklov and (continuous) trace and inverse inequalities, as well as (optimal) approximation properties for local $L^2$-orthogonal polynomial projectors, then hold on each cell $K\in\TT_H$ for any $H\in\mathcal{H}$, with multiplicative constants only depending on $\rho$. We refer the reader, e.g., to~\cite{Brenn:03} for the idea of submeshing into simplices, to \cite[Sec.~1.4.3]{DiPEr:12} for the (continuous) trace and inverse inequalities, to \cite{VeeVe:12} and \cite[Lem.~5.7]{ErnGu:17} for Poincar\'e--Steklov inequalities on sets composed of simplices, and to \cite[Lem.~5.6]{ErnGu:17} for the resulting higher-order polynomial approximation properties; see also the recent monographs~\cite{DPDro:20,CiErP:21} on HHO methods. In what follows, we use the symbol $\lesssim$ to denote an inequality that is valid up to a multiplicative constant only depending on the discretization through the parameter $\rho$.

In order to track the dependency of the error estimates with respect to the diffusion coefficient, for any $K\in\TT_H$, we denote by $a_{\flat,K}>0$ the local smallest eigenvalue of the coefficient $\mathbb{A}$ in the cell $K$, in such a way that $\mathbb{A}(\boldsymbol{x})\boldsymbol{\xi}\cdot\boldsymbol{\xi}\geq a_{\flat,K}|\boldsymbol{\xi}|^2$ for all $\boldsymbol{\xi}\in\mathbb{R}^d$ and almost every $\boldsymbol{x}\in K$.

Finally, given any measurable set $D\subset\overline{\Omega}$, and any integer $s\geq 0$, we respectively denote by $|\cdot|_{s,D}$ and $\|\cdot\|_{s,D}$ the standard seminorm and norm in $H^s(D;\mathbb{R}^\ell)$, for $\ell\in\{1,d\}$. We also define $H^s(\TT_H;\mathbb{R}^\ell)$ as the space of piecewise $\mathbb{R}^\ell$-valued $H^s$ functions on the partition $\TT_H$, with the convention that $H^s(\TT_H;\mathbb{R})$ is simply noted $H^s(\TT_H)$.

\subsection{Local approximation}

Let $m,k\in\mathbb{N}$ be given. Let $K\in\TT_H$, and recall the definition~\eqref{eq:def_calU} of the space $\calU^{m,k}(K)$.

\begin{lemma}[Approximation in $\calU^{m,k}(K)$] \label{lem:appr}
  Let $v\in H^1(K)$, and set $g\defi -\div(\mathbb{A}\grad v)$ in $K$.
  Assume that $g\in H^{m+1}(K)$ and that $\mathbb{A}\grad v\in H^{k+1}(K;\mathbb{R}^d)$.
  There exists $\pi^{m,k}_K(v)\in\calU^{m,k}(K)$ such that
  \begin{equation} \label{eq:interpol}
    \|\mathbb{A}^{\nicefrac12}\grad\big(v-\pi_K^{m,k}(v)\big)\|_{0,K}\lesssim a_{\flat,K}^{-\nicefrac12}\left(H_K^{m+2}|g|_{m+1,K}+H_K^{k+1}|\mathbb{A}\grad v|_{k+1,K}\right).
  \end{equation}
\end{lemma}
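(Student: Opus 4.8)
The plan is to construct $\pi^{m,k}_K(v)$ as the solution of a local Neumann problem whose data are the $L^2$-projections of the exact data attached to $v$, and then to bound the error by an energy argument. First I would set $g\defi-\div(\mathbb{A}\grad v)$ and $\psi\defi(\mathbb{A}\grad v)_{\mid\partial K}\cdot\nn_K$, and define $\pi^{m,k}_K(v)\in H^1(K)$ (normalized to zero mean, since only its gradient enters the estimate) as the unique solution of
\begin{equation*}
-\div(\mathbb{A}\grad\pi^{m,k}_K(v))=\Pi^{m}_K(g)\text{ in }K,\qquad(\mathbb{A}\grad\pi^{m,k}_K(v))_{\mid\partial K}\cdot\nn_K=\Pi^k_{\FF_K}(\psi)\text{ on }\partial K.
\end{equation*}
Since $m,k\geq0$, the constants lie in $\PP^{m}(K)$ and in $\PP^k(F)$ for every $F\in\FF_K$, so these projections preserve integrals; combined with the divergence theorem applied to $\mathbb{A}\grad v$ (which gives $(g,1)_K+(\psi,1)_{\partial K}=0$), this shows that the Neumann compatibility condition $(\Pi^{m}_K(g),1)_K+(\Pi^k_{\FF_K}(\psi),1)_{\partial K}=0$ holds, so $\pi^{m,k}_K(v)$ is well defined. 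By construction its weighted flux has divergence in $\PP^{m}(K)$ and normal trace in $\PP^k(\FF_K)$, hence $\pi^{m,k}_K(v)\in\calU^{m,k}(K)$ as required.

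Next I would set $e\defi v-\pi^{m,k}_K(v)$, which solves $-\div(\mathbb{A}\grad e)=(I-\Pi^{m}_K)(g)$ in $K$ and $(\mathbb{A}\grad e)_{\mid\partial K}\cdot\nn_K=(I-\Pi^k_{\FF_K})(\psi)$ on $\partial K$, and integrate by parts to obtain the energy identity
\begin{equation*}
\|\mathbb{A}^{\nicefrac12}\grad e\|_{0,K}^2=\big((I-\Pi^{m}_K)(g),e\big)_K+\big((I-\Pi^k_{\FF_K})(\psi),e\big)_{\partial K}.
\end{equation*}
Using that $(I-\Pi^{m}_K)(g)$ is $L^2(K)$-orthogonal to constants and that $(I-\Pi^k_{\FF_K})(\psi)$ is $L^2(\partial K)$-orthogonal to $\PP^k(\FF_K)$, I can subtract $\Pi^0_K(e)$ from $e$ in the first term and replace $e$ by $(I-\Pi^k_{\FF_K})(e)$ in the second, then apply Cauchy--Schwarz to each.

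The final step is to estimate the four resulting factors. For the data I would invoke the optimal approximation property $\|(I-\Pi^{m}_K)(g)\|_{0,K}\lesssim H_K^{m+1}|g|_{m+1,K}$, together with the trace approximation estimate $\|(I-\Pi^k_{\FF_K})(\psi)\|_{0,\partial K}\lesssim H_K^{k+\nicefrac12}|\mathbb{A}\grad v|_{k+1,K}$, the latter obtained componentwise on each face (where $\nn_{K,F}$ is constant) by combining the cell approximation of $\mathbb{A}\grad v$ with the continuous trace inequality. For the factors involving $e$ I would use Poincar\'e--Steklov, $\|e-\Pi^0_K(e)\|_{0,K}\lesssim H_K\|\grad e\|_{0,K}$, and, on the boundary, $\|(I-\Pi^k_{\FF_K})(e)\|_{0,\partial K}\leq\|e-\Pi^0_K(e)\|_{0,\partial K}\lesssim H_K^{\nicefrac12}\|\grad e\|_{0,K}$, where the last bound follows from the continuous trace inequality and Poincar\'e--Steklov. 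Collecting everything yields
\begin{equation*}
\|\mathbb{A}^{\nicefrac12}\grad e\|_{0,K}^2\lesssim\big(H_K^{m+2}|g|_{m+1,K}+H_K^{k+1}|\mathbb{A}\grad v|_{k+1,K}\big)\|\grad e\|_{0,K},
\end{equation*}
and I would conclude via local ellipticity, $\|\grad e\|_{0,K}\leq a_{\flat,K}^{-\nicefrac12}\|\mathbb{A}^{\nicefrac12}\grad e\|_{0,K}$, dividing through by $\|\mathbb{A}^{\nicefrac12}\grad e\|_{0,K}$. I expect the main obstacle to be essentially bookkeeping: verifying the Neumann compatibility condition so that $\pi^{m,k}_K(v)$ exists, and tracking the half-integer powers of $H_K$ in the face trace estimates so that the boundary contribution balances to exactly $H_K^{k+1}$.
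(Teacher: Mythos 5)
Your proposal is correct and follows essentially the same route as the paper: the same interpolant defined by a local Neumann problem with $L^2$-projected data, the same compatibility check, the same orthogonality manipulations, and the same Poincar\'e--Steklov/trace/projection estimates, concluded via local ellipticity. The only cosmetic difference is that you test the error equation directly with $e=v-\pi^{m,k}_K(v)$, whereas the paper states it for an arbitrary $w\in H^1(K)$ and takes a supremum before choosing $w=e$.
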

\begin{proof}
  Define $\pi^{m,k}_K(v)\in\calU^{m,k}(K)$ such that
  \begin{equation} \label{inter}
    -\div(\mathbb{A}\grad\pi^{m,k}_K(v))=\Pi^m_K(g)\quad\text{in }K,\qquad\mathbb{A}\grad\pi^{m,k}_K(v)\cdot\boldsymbol{n}_K=\Pi^k_{\FF_K}(\mathbb{A}\grad v\cdot\boldsymbol{n}_K)\quad\text{on }\partial K.
  \end{equation}
  Since $g=-\div(\mathbb{A}\grad v)$, we easily check that $(\Pi^m_K(g),1)_K+(\Pi^k_{\FF_K}(\mathbb{A}\grad v\cdot\boldsymbol{n}_K),1)_{\partial K}=0$; hence, the data of the Neumann problem~\eqref{inter} are compatible, and $\pi^{m,k}_K(v)$ is well-defined (up to an additive constant). Multiplying the first relation in~\eqref{inter} by $w\in H^1(K)$, integrating by parts, and using the compatibility of the data, yields
  \begin{equation} \label{calc.1}
    \begin{split}
    (\mathbb{A}\grad\pi^{m,k}_K(v),\grad w)_K&=(\Pi^m_K(g),w)_K+(\Pi^k_{\FF_K}(\mathbb{A}\grad v\cdot\boldsymbol{n}_K),w)_{\partial K}\\
      &=(\Pi^m_K(g),w-\Pi^0_K(w))_K+(\Pi^k_{\FF_K}(\mathbb{A}\grad v\cdot\boldsymbol{n}_K),w-\Pi^0_K(w))_{\partial K}.
      \end{split}
  \end{equation}
  By definition of $g$, we also have
  \begin{equation} \label{calc.2}
    \begin{split}
    (\mathbb{A}\grad v,\grad w)_K&=(g,w)_K+(\mathbb{A}\grad v\cdot\boldsymbol{n}_K,w)_{\partial K}\\
      &=(g,w-\Pi^0_K(w))_K+(\mathbb{A}\grad v\cdot\boldsymbol{n}_K,w-\Pi^0_K(w))_{\partial K}.
      \end{split}
  \end{equation}
  Subtracting~\eqref{calc.2} from~\eqref{calc.1}, we obtain, for any $w\in H^1(K)$,
  \begin{multline} \label{calc.3}
    (\mathbb{A}\grad\big(v-\pi^{m,k}_K(v)\big),\grad w)_K=(g-\Pi^m_K(g),w-\Pi^0_K(w))_K\\+(\mathbb{A}\grad v\cdot\boldsymbol{n}_K-\Pi^k_{\FF_K}(\mathbb{A}\grad v\cdot\boldsymbol{n}_K),w-\Pi^0_K(w))_{\partial K}.
  \end{multline}
  Applying the Cauchy--Schwarz inequality together with a local Poincar\'e--Steklov inequality for the first term in the right-hand side of~\eqref{calc.3}, and the Cauchy--Schwarz inequality combined with a (continuous) trace inequality and a local Poincar\'e--Steklov inequality for the second, we infer
  \begin{multline} \label{calc.4}
    (\mathbb{A}\grad\big(v-\pi^{m,k}_K(v)\big),\grad w)_K \lesssim \|g-\Pi^m_K(g)\|_{0,K}H_K|w|_{1,K}\\+\color{red}  \color{black}\|\mathbb{A}\grad v-\boldsymbol{\Pi}^k_{\FF_K}(\mathbb{A}\grad v)\|_{0,\partial K}H_K^{\nicefrac12}|w|_{1,K},
  \end{multline}
  where we also used the fact that $\Pi^k_{\FF_K}(\mathbb{A}\grad v\cdot\boldsymbol{n}_K)=\boldsymbol{\Pi}^k_{\FF_K}(\mathbb{A}\grad v)\cdot\boldsymbol{n}_K$ since the mesh faces are planar, combined with the fact that $\boldsymbol{n}_K$ is unitary, to handle the boundary term. By definition of $L^2$-orthogonal projectors, we have
  \begin{equation} \label{partial.opt}
    \|\mathbb{A}\grad v-\boldsymbol{\Pi}^k_{\FF_K}(\mathbb{A}\grad v)\|_{0,\partial K}=\min_{\boldsymbol{p}\in\mathbb{P}^k(\FF_K;\mathbb{R}^d)}\|\mathbb{A}\grad v-\boldsymbol{p}\|_{0,\partial K}\leq\|\mathbb{A}\grad v-\boldsymbol{\Pi}^k_K(\mathbb{A}\grad v)\|_{0,\partial K}.
  \end{equation}
  By standard approximation properties of $L^2$-orthogonal projectors, we finally obtain from~\eqref{calc.4} and~\eqref{partial.opt},
  $$\sup_{w\in H^1(K)\setminus\{0\}}\frac{(\mathbb{A}\grad\big(v-\pi^{m,k}_K(v)\big),\grad w)_K}{|w|_{1,K}}\lesssim H_K^{m+2}|g|_{m+1,K}+H_K^{k+1}|\mathbb{A}\grad v|_{k+1,K}.$$
  The conclusion follows choosing $w=v-\pi^{m,k}_K(v)$, and since $|w|_{1,K}^2\leq a_{\flat,K}^{-1}\|\mathbb{A}^{\nicefrac12}\grad w\|_{0,K}^2$.
\end{proof}

\begin{remark}[Case $m=-1$] \label{memo}
  Recall that $\mathbb{P}^{-1}(K)\defi\{0\}$. The result of Lemma~\ref{lem:appr} remains valid as it is in the case $m=-1$ (for $g\in L^2(K)$). The proof needs just be slightly adapted with respect to the general case $m\geq 0$. The interpolant $\pi^{-1,k}(v)\in\calU^{-1,k}(K)$ is defined as follows:
  \begin{equation*}
    -\div(\mathbb{A}\grad\pi^{-1,k}_K(v))=0\quad\text{in }K,\qquad\mathbb{A}\grad\pi^{-1,k}_K(v)\cdot\boldsymbol{n}_K=\Pi^k_{\FF_K}(\mathbb{A}\grad v\cdot\boldsymbol{n}_K)+\frac{1}{|\partial K|}(g,1)_{K}\quad\text{on }\partial K.
  \end{equation*}
  The identity~\eqref{calc.3} becomes
  \begin{multline*}
    (\mathbb{A}\grad\big(v-\pi^{-1,k}_K(v)\big),\grad w)_K=(g,w-\Pi^0_K(w))_K-\frac{1}{|\partial K|}(g,1)_K(w-\Pi^0_K(w),1)_{\partial K}\\+(\mathbb{A}\grad v\cdot\boldsymbol{n}_K-\Pi^k_{\FF_K}(\mathbb{A}\grad v\cdot\boldsymbol{n}_K),w-\Pi^0_K(w))_{\partial K}.
  \end{multline*}
  The conclusion then follows from the same arguments, using in addition that $\frac{|K|}{|\partial K|}\lesssim H_K$ under our mesh regularity assumptions to handle the second term in the first line of the right-hand side. 
\end{remark}

\subsection{Energy-norm error estimate}

Let $m,k\in\mathbb{N}$ be given.
We introduce, for any $K\in\TT_H$, the (local, canonical) interpolation operator $\mathcal{I}_K:H^1(K)\to\calU^{m,k}(K)$ associated with the finite element~\eqref{eq:triple_MsHHO} such that $\mathcal{I}_K\defi r_K\circ\widehat{\Sigma}_K$. Using the definition~\eqref{eq:recons} of the reconstruction operator, as well as the definition of the reduction operator $\widehat{\Sigma}_K$, we infer that, for any $v\in H^1(K)$,
\begin{subequations} \label{eq:ell.proj} \begin{align}
(\mathbb{A}\grad \mathcal{I}_K(v) , \grad w)_K
&=(\mathbb{A}\grad v,\grad w)_K
\quad \forall w \in \calU^{m,k}(K)\,, \label{epa}\\
(\mathcal{I}_K(v),1)_{\partial K} &= (v,1)_{\partial K}\,.
\end{align}\end{subequations}
Hence, $\mathcal{I}_K(v)\in\calU^{m,k}(K)$ is the ($\mathbb{A}$-weighted) elliptic projection of $v\in H^1(K)$ onto $\calU^{m,k}(K)$. As such, it satisfies
\begin{equation} \label{eq:opt}
  \|\mathbb{A}^{\nicefrac12}\grad\big(v-\mathcal{I}_K(v)\big)\|_{0,K}=\min_{w\in\calU^{m,k}(K)}\|\mathbb{A}^{\nicefrac12}\grad\big(v-w)\|_{0,K}.
\end{equation}
\begin{theorem}[Energy-norm error estimate] \label{th:err.est}
  Recall that $u\in H^1_0(\Omega)$ is the unique solution to~\eqref{eq_laplace_weak}. Let $u_H\in\calU^{m,k}(\TT_H)\cap\tH^{1,k}_0(\TT_H)$ denote either the (fully explicit) MHM solution~\eqref{mhm-sol-alt} to Problem~\eqref{mhm-alt}, or the MsHHO solution~\eqref{eq:def_uHHO_1} to Problem~\eqref{mshho}. Assume that $f\in H^{m+1}(\TT_H)$ and that $\mathbb{A}\grad u\in H^{k+1}(\TT_H;\mathbb{R}^d)$. Then, we have
  \begin{equation} \label{eq:estim}
    \|\mathbb{A}^{\nicefrac12}\grad_H\big(u-u_H\big)\|_{0,\Omega}\lesssim\left(\sum_{K\in\TT_H}a_{\flat,K}^{-1}\left(H_K^{2(m+2)}|f|_{m+1,K}^2+H_K^{2(k+1)}|\mathbb{A}\grad u|_{k+1,K}^2\right)\right)^{\nicefrac12}.
  \end{equation}
\end{theorem}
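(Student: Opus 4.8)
The plan is to exploit the Galerkin-type structure shared by both methods. By Lemma~\ref{lem:fully-mhm} and Lemma~\ref{lemma_hho}, the solution $u_H$ (whether MHM or MsHHO) coincides and solves~\eqref{eq:MsHHO_equiv}, namely $(\mathbb{A}\grad_H u_H,\grad_H v_H)_\Omega=(\Pi^m_H(f),v_H)_\Omega$ for all $v_H\in\calU^{m,k}(\TT_H)\cap\tH^{1,k}_0(\TT_H)$. The exact solution $u\in H^1_0(\Omega)$ satisfies $(\mathbb{A}\grad u,\grad_H v_H)_\Omega=(f,v_H)_\Omega$ for every such $v_H$, since $u\in H^1(\TT_H)$ with $\grad_H u=\grad u$, and testing against $v_H\in\tH^{1,k}_0(\TT_H)$ is licit because $\grad_H v_H$ need not be conforming. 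Subtracting the two relations yields the error equation
\begin{equation*}
(\mathbb{A}\grad_H(u-u_H),\grad_H v_H)_\Omega=(f-\Pi^m_H(f),v_H)_\Omega\qquad\forall\,v_H\in\calU^{m,k}(\TT_H)\cap\tH^{1,k}_0(\TT_H)\,.
\end{equation*}

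Next I would insert the interpolant $\mathcal{I}_H(u)$ (defined cellwise by $\mathcal{I}_K(u_K)\in\calU^{m,k}(K)$) as an intermediate term, writing $u-u_H=(u-\mathcal{I}_H(u))+(\mathcal{I}_H(u)-u_H)$. Setting $e_H\defi\mathcal{I}_H(u)-u_H\in\calU^{m,k}(\TT_H)$, one must first check that $e_H$ belongs to $\tH^{1,k}_0(\TT_H)$ so that it is an admissible test function; this follows because $\mathcal{I}_K(u)$ inherits the correct face moments from $u\in H^1_0(\Omega)$ through the reduction operator $\widehat{\Sigma}_K$, so the jumps of $\mathcal{I}_H(u)$ have vanishing $\PP^k$-moments, and $u_H$ already lies in $\tH^{1,k}_0(\TT_H)$. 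Testing the error equation with $v_H=e_H$ and using the elliptic-projection property~\eqref{epa}, which gives $(\mathbb{A}\grad_H(u-\mathcal{I}_H(u)),\grad_H e_H)_\Omega=0$ since $e_H\in\calU^{m,k}(\TT_H)$, one obtains $\|\mathbb{A}^{\nicefrac12}\grad_H e_H\|_{0,\Omega}^2=(f-\Pi^m_H(f),e_H)_\Omega$.

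The consistency term on the right is where some care is required. Because $\Pi^m_H(f)$ is the $L^2$-projection onto $\PP^m(\TT_H)$, I can subtract $\Pi^0_K(e_H)$ cellwise to write $(f-\Pi^m_K(f),e_H)_K=(f-\Pi^m_K(f),e_H-\Pi^0_K(e_H))_K$, then apply Cauchy--Schwarz, a polynomial approximation bound $\|f-\Pi^m_K(f)\|_{0,K}\lesssim H_K^{m+1}|f|_{m+1,K}$, and a local Poincar\'e--Steklov inequality $\|e_H-\Pi^0_K(e_H)\|_{0,K}\lesssim H_K|e_H|_{1,K}$, producing the factor $H_K^{m+2}|f|_{m+1,K}$ multiplied by $|e_H|_{1,K}\leq a_{\flat,K}^{-\nicefrac12}\|\mathbb{A}^{\nicefrac12}\grad e_H\|_{0,K}$. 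Dividing through by $\|\mathbb{A}^{\nicefrac12}\grad_H e_H\|_{0,\Omega}$ and summing cellwise with Cauchy--Schwarz in the sum controls the source contribution.

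Finally I would invoke the triangle inequality $\|\mathbb{A}^{\nicefrac12}\grad_H(u-u_H)\|_{0,\Omega}\leq\|\mathbb{A}^{\nicefrac12}\grad_H(u-\mathcal{I}_H(u))\|_{0,\Omega}+\|\mathbb{A}^{\nicefrac12}\grad_H e_H\|_{0,\Omega}$, bounding the approximation term by Lemma~\ref{lem:appr} applied cellwise (with $g=f$, valid since $-\div(\mathbb{A}\grad u)=f$ in each $K$), which yields exactly the two summands $H_K^{m+2}|f|_{m+1,K}$ and $H_K^{k+1}|\mathbb{A}\grad u|_{k+1,K}$. The main obstacle I anticipate is the bookkeeping around the consistency term: one must verify that $e_H\in\tH^{1,k}_0(\TT_H)$ to legitimately test with it, and handle the fact that the source is seen only through $\Pi^m_H(f)$ rather than $f$ itself, exploiting the projection to recover the optimal $H_K^{m+2}$ rate via the orthogonality trick; the rest is standard elliptic-projection and polynomial-approximation machinery.
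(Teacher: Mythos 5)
You have a genuine gap at the very first step: the ``error equation'' you derive is false, because the exact solution does \emph{not} satisfy the variational identity against nonconforming test functions. The test space $\calU^{m,k}(\TT_H)\cap\tH^{1,k}_0(\TT_H)$ is not contained in $H^1_0(\Omega)$; its members are only weakly $H^1_0$, meaning their jumps across faces have vanishing moments against $\PP^k(F)$. Testing the exact problem with such a $v_H$ requires cell-by-cell integration by parts, which yields
\begin{equation*}
(\mathbb{A}\grad u,\grad_H v_H)_\Omega=(f,v_H)_\Omega+\sum_{F\in\FF_H}(\mathbb{A}\grad u\cdot\nn_F,\jump{v_H}_F)_F\,,
\end{equation*}
and the face sum does not vanish in general: the condition $v_H\in\tH^{1,k}_0(\TT_H)$ only annihilates the $\PP^k(F)$-moments of the jumps, so the sum is zero only if $\mathbb{A}\grad u\cdot\nn_F$ happens to be a polynomial of degree at most $k$ on every face. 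Your justification (``testing against $v_H$ is licit because $\grad_H v_H$ need not be conforming'') is an assertion, not an argument. The correct error equation reads
\begin{equation*}
(\mathbb{A}\grad_H(u-u_H),\grad_H v_H)_\Omega=(f-\Pi^m_H(f),v_H)_\Omega+\sum_{F\in\FF_H}(\mathbb{A}\grad u\cdot\nn_F,\jump{v_H}_F)_F\,,
\end{equation*}
and the term you dropped is precisely the nonconformity (consistency) error of the method.

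This omission is not cosmetic: estimating that face term is one of the two substantive steps of the paper's proof (the term $\mathfrak{T}_1$ there). One uses $\Pi^k_F(\jump{v_H}_F)=0$ to subtract $\Pi^k_F(\mathbb{A}\grad u\cdot\nn_F)$ and $\Pi^0_F(v_H)$, then Cauchy--Schwarz, a trace inequality, a Poincar\'e--Steklov inequality, and polynomial approximation, producing the $H_K^{k+1}|\mathbb{A}\grad u|_{k+1,K}$ contribution to the consistency error. The rest of your architecture does mirror the paper's: triangle inequality about the elliptic projection $\mathcal{I}_H(u)$, Lemma~\ref{lem:appr} for the approximation term, the check that $\mathcal{I}_H(u)-u_H$ is an admissible test function, and the orthogonality trick for the data-oscillation term (the paper's $\mathfrak{T}_2$). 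That your final bound nevertheless displays the correct two rates is a coincidence of bookkeeping --- the approximation term from Lemma~\ref{lem:appr} also carries an $H_K^{k+1}|\mathbb{A}\grad u|_{k+1,K}$ piece, so the missing consistency contribution is invisible in the final formula --- but the proof as written is invalid, since it relies on a Galerkin orthogonality that the nonconforming setting does not provide. To repair it, reinstate the face sum in your error equation and estimate it as above when you test with $e_H=\mathcal{I}_H(u)-u_H$.
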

\begin{proof}
  First, by Theorem~\ref{th:equiv}, we know that the fully explicit MHM and MsHHO solutions coincide for all $f\in L^2(\Omega)$. We consider here the characterization~\eqref{eq:MsHHO_equiv} of $u_H$. Let $\mathcal{I}_H:H^1(\TT_H)\to\calU^{m,k}(\TT_H)$ denote the global interpolation operator such that, for all $v\in H^1(\TT_H)$, $\mathcal{I}_H(v)_{\mid K}\defi\mathcal{I}_K(v_{K})$ for all $K\in\TT_H$. Remark that, since $u\in H^1_0(\Omega)$, $\mathcal{I}_H(u)\in\calU^{m,k}(\TT_H)\cap\tH^{1,k}_0(\TT_H)$. By the triangle inequality, we split the discretization error as follows:
  \begin{equation} \label{split}
    \|\mathbb{A}^{\nicefrac12}\grad_H\big(u-u_H\big)\|_{0,\Omega}\leq\|\mathbb{A}^{\nicefrac12}\grad_H\big(u-\mathcal{I}_H(u)\big)\|_{0,\Omega}+\|\mathbb{A}^{\nicefrac12}\grad_H\big(\mathcal{I}_H(u)-u_H\big)\|_{0,\Omega}.
  \end{equation}
  The first term in the right-hand side of~\eqref{split} is an approximation error, and is estimated using the optimality property~\eqref{eq:opt} combined with the local approximation properties in $\calU^{m,k}(\TT_H)$ of Lemma~\ref{lem:appr}. Letting, for all $v\in H^1(\TT_H)$, $\pi^{m,k}_H(v)\in\calU^{m,k}(\TT_H)$ be the global interpolate such that $\pi^{m,k}_H(v)_{\mid K}=\pi^{m,k}_K(v_K)$ for all $K\in\TT_H$, we infer
  \begin{equation} \label{appro}
    \begin{split}
      \|\mathbb{A}^{\nicefrac12}\grad_H\big(u-\mathcal{I}_H(u)\big)\|_{0,\Omega}&=\min_{w_H\in\calU^{m,k}(\TT_H)}\|\mathbb{A}^{\nicefrac12}\grad_H\big(u-w_H\big)\|_{0,\Omega}\\&\leq\|\mathbb{A}^{\nicefrac12}\grad_H\big(u-\pi^{m,k}_H(u)\big)\|_{0,\Omega}\\&\lesssim\left(\sum_{K\in\TT_H}a_{\flat,K}^{-1}\left(H_K^{2(m+2)}|f|_{m+1,K}^2+H_K^{2(k+1)}|\mathbb{A}\grad u|_{k+1,K}^2\right)\right)^{\nicefrac12}.
    \end{split}
  \end{equation}
  The second term in the right-hand side of~\eqref{split} is the consistency error of the method, which satisfies, since $\big(\mathcal{I}_H(u)-u_H\big)\in\widetilde{\calU}^{m,k}_0(\TT_H)\defi\calU^{m,k}(\TT_H)\cap\tH^{1,k}_0(\TT_H)$,
  \begin{equation} \label{max}
    \|\mathbb{A}^{\nicefrac12}\grad_H\big(\mathcal{I}_H(u)-u_H\big)\|_{0,\Omega}=\max_{\substack{v_H\in\widetilde{\calU}_0^{m,k}(\TT_H),\\\|\mathbb{A}^{\nicefrac12}\grad_Hv_H\|_{0,\Omega}=1}}(\mathbb{A}\grad_H\big(\mathcal{I}_H(u)-u_H\big),\grad_Hv_H)_{\Omega}.
  \end{equation}
  Let $v_H\in\widetilde{\calU}_0^{m,k}(\TT_H)$ be such that $\|\mathbb{A}^{\nicefrac12}\grad_Hv_H\|_{0,\Omega}=1$. Since $u_H$ solves~\eqref{eq:MsHHO_equiv}, we infer
  \begin{equation}\label{dev}
    \begin{split}
      (\mathbb{A}\grad_H\big(\mathcal{I}_H(u)-u_H\big),\grad_Hv_H)_{\Omega}&=(\mathbb{A}\grad_H\mathcal{I}_H(u),\grad_Hv_H)_{\Omega}-(\Pi^m_H(f),v_H)_{\Omega}\\
      &=(\mathbb{A}\grad_H\mathcal{I}_H(u),\grad_Hv_H)_{\Omega}+(\div(\mathbb{A}\grad u),v_H)_{\Omega}+(f-\Pi^m_H(f),v_H)_{\Omega}\\
      &=(\mathbb{A}\grad_H\big(\mathcal{I}_H(u)-u\big),\grad_Hv_H)_{\Omega}+\sum_{K\in\TT_H}\sum_{F\in\FF_K}(\mathbb{A}\grad u_K\cdot\boldsymbol{n}_{K,F},v_K)_F\\&\hspace{1cm}+(f-\Pi^m_H(f),v_H)_{\Omega}\\
      &=\sum_{F\in\FF_H}(\mathbb{A}\grad u\cdot\boldsymbol{n}_F,\llbracket v_H\rrbracket_F)_F+(f-\Pi^m_H(f),v_H)_{\Omega}\ifed\mathfrak{T}_1+\mathfrak{T}_2\,,
    \end{split}
  \end{equation}
  where we added and subtracted $(f,v_H)_{\Omega}$ and used the fact that $f=-\div(\mathbb{A}\grad u)$ in $\Omega$ to pass from the first to the second line, we performed cell-by-cell integration by parts to pass from the second to the third line, and finally used the local orthogonality property~\eqref{epa} as well as the fact that $\llbracket\mathbb{A}\grad u\rrbracket_F\cdot\boldsymbol{n}_F=0$ for all $F\in\FFi_H$ as a consequence of the fact that $\mathbb{A}\grad u\in\boldsymbol{H}({\rm div},\Omega)\cap H^1(\TT_H;\mathbb{R}^d)$ to pass from the third to the fourth line.
  To estimate $\mathfrak{T}_1$, we remark that, since $v_H\in\tH^{1,k}_0(\TT_H)$, $\Pi^k_F(\llbracket v_H\rrbracket_F)=0$ for all $F\in\FF_H$. We thus have
  \begin{equation*}
    \begin{split}
      \mathfrak{T}_1&=\sum_{F\in\FF_H}(\mathbb{A}\grad u\cdot\boldsymbol{n}_F-\Pi^k_F(\mathbb{A}\grad u\cdot\boldsymbol{n}_F),\llbracket v_H-\Pi^0_F(v_H)\rrbracket_F)_F\\&=\sum_{K\in\TT_H}\sum_{F\in\FF_K}(\big(\mathbb{A}\grad u_K-\boldsymbol{\Pi}^k_F(\mathbb{A}\grad u_K)\big)\cdot\boldsymbol{n}_{K,F},v_K-\Pi^0_F(v_K))_F\,.
    \end{split}
  \end{equation*}
  By two successive applications of the Cauchy--Schwarz inequality, we infer
  \begin{equation*}
    \mathfrak{T}_1\leq\left(\sum_{K\in\TT_H}a_{\flat,K}^{-1}H_K\|\mathbb{A}\grad u_K-\boldsymbol{\Pi}^k_{\FF_K}(\mathbb{A}\grad u_K)\|^2_{0,\partial K}\right)^{\nicefrac12}\left(\sum_{K\in\TT_H}a_{\flat,K}H_K^{-1}\|v_K-\Pi^0_{\FF_K}(v_K)\|^2_{0,\partial K}\right)^{\nicefrac12}.
  \end{equation*}
  The first factor in the right-hand side is estimated using~\eqref{partial.opt} and standard approximation properties of $L^2$-orthogonal projectors. The second factor is estimated by adding/subtracting $\Pi^0_K(v_K)$, using a triangle inequality combined with the $L^2(\partial K)$-stability of $\Pi^0_{\FF_K}$, and concluding by the use of a (continuous) trace inequality combined with a local Poincar\'e--Steklov inequality. We obtain
  \begin{equation*}
    \mathfrak{T}_1\lesssim \left(\sum_{K\in\TT_H}a_{\flat,K}^{-1}H_K^{2(k+1)}|\mathbb{A}\grad u|_{k+1,K}^2\right)^{\nicefrac12}\left(\sum_{K\in\TT_H}a_{\flat,K}|v_K|_{1,K}^2\right)^{\nicefrac12}.
  \end{equation*}
  Recalling that $\|\mathbb{A}^{\nicefrac12}\grad_Hv_H\|_{0,\Omega}=1$, and since $a_{\flat,K}|v_K|_{1,K}^2\leq\|\mathbb{A}^{\nicefrac12}\grad v_K\|_{0,K}^2$, we finally infer that
  \begin{equation}\label{t1}
    \mathfrak{T}_1\lesssim \left(\sum_{K\in\TT_H}a_{\flat,K}^{-1}H_K^{2(k+1)}|\mathbb{A}\grad u|_{k+1,K}^2\right)^{\nicefrac12}.
  \end{equation}
  The term $\mathfrak{T}_2$ is, in turn, easily estimated using the definition of the $L^2$-orthogonal projection to write
  $$\mathfrak{T}_2=(f-\Pi^m_H(f),v_H-\Pi^0_H(v_H))_{\Omega},$$
  and invoking the Cauchy--Schwarz inequality, a local Poincar\'e--Steklov inequality, and standard approximation properties of $L^2$-orthogonal projectors to conclude. We obtain
  \begin{equation}\label{t2}
    \begin{split}
      \mathfrak{T}_2&\lesssim \left(\sum_{K\in\TT_H}a_{\flat,K}^{-1}H_K^{2(m+2)}|f|_{m+1,K}^2\right)^{\nicefrac12}\left(\sum_{K\in\TT_H}a_{\flat,K}|v_K|_{1,K}^2\right)^{\nicefrac12}\\&\lesssim \left(\sum_{K\in\TT_H}a_{\flat,K}^{-1}H_K^{2(m+2)}|f|_{m+1,K}^2\right)^{\nicefrac12},
    \end{split}
  \end{equation}
  where we used again that $\|\mathbb{A}^{\nicefrac12}\grad_Hv_H\|_{0,\Omega}=1$ to pass from the first to the second line.
  Finally, plugging~\eqref{t1}-\eqref{t2}-\eqref{dev}-\eqref{max} and~\eqref{appro} into~\eqref{split} proves~\eqref{eq:estim}.
\end{proof}

\begin{remark}[Case $m=-1$] \label{rem:mo}
  We know from Remark~\ref{re:mo} that the (fully explicit) MHM method for $m=-1$ coincides with the (fully explicit) MHM method for $m=0$. As far as the MsHHO method is concerned, in the case $m=-1$, one adopts the variant~\eqref{eq:MsHHO_modif} of the method, and the a priori estimate of Theorem~\ref{th:err.est} remains valid as is (for $f\in L^2(\Omega)$). The proof actually simplifies with respect to the general case $m\geq 0$, since the term $\mathfrak{T}_2$ can be discarded. The conclusion follows from Lemma~\ref{lem:appr} and Remark~\ref{memo}.
\end{remark}

\begin{remark}[Case $m=k-1$]
  In the case $m=k-1$, the result~\eqref{eq:estim} (see Remark~\ref{rem:mo} for the case $k=0$ and $m=-1$) simplifies since $|f|_{k,K}\leq\sqrt{d}\,|\mathbb{A}\grad u|_{k+1,K}$ for all $K\in\TT_H$. Under the sole assumption that 
$\mathbb{A}\grad u\in H^{k+1}(\TT_H;\mathbb{R}^d)$, we then have
  $$\|\mathbb{A}^{\nicefrac12}\grad_H\big(u-u_H\big)\|_{0,\Omega}\lesssim\left(\sum_{K\in\TT_H}a_{\flat,K}^{-1}H_K^{2(k+1)}|\mathbb{A}\grad u|_{k+1,K}^2\right)^{\nicefrac12}.$$
  In the MHM setting, when $k=0$ (then one can discard the contribution given by the operator $T\us$), we obtain an optimal error estimate under the sole assumption on the source term that $f\in L^2(\Omega)$, which improves on~\cite[Corollary 4.2]{ArHPV:13} where more regularity is needed.
\end{remark}

\begin{remark}[Link with previous results]
  In the MHM framework, the error estimate of Theorem~\ref{th:err.est} is a refined version of~\cite[Theorem 4.1]{ArHPV:13} (for the original, semi-explicit MHM method), both in terms of regularity assumptions and in terms of tracking of the dependency of the multiplicative constants with respect to the diffusion coefficient. In the MsHHO framework, such an error estimate is new, and is complementary to the homogenization-based error estimate of~\cite[Theorem 5.6]{CiErL:19a} (such a homogenization-based analysis is also available in the MHM setting; cf.~\cite{PaVaV:17}). The a priori estimate of~\cite[Theorem 5.6]{CiErL:19a} is robust in highly oscillatory diffusion regimes but is suboptimal for mildly varying diffusion. The present result fills this gap. 
\end{remark}

\section{Basis functions and solution strategies}
\label{basis-ddm}

We address the decomposition of the MHM and MsHHO solutions in terms of multiscale basis functions and highlight the impact of such a decomposition on the possible organization of the computations using an offline-online strategy.
Let $k\geq 1$ be a given integer. In what follows, to keep the presentation simple, we consider for a polynomial degree $k$ 
on the faces
the polynomial degree $m\defi k-1\geq 0$ in the cell, and, following our convention, we simply write $\calU^{k}(K)$ in place of $\calU^{k-1,k}(K)$ for all $K\in\TT_H$.
The key observation is that there are two possible constructions of basis functions for the local space $\calU^{k}(K)$. Both sets of basis functions are composed of cell-based and face-based functions. The construction of the two sets is however different. The first construction, referred to as {\em primal set}, will prove to be relevant for the MHM method, whereas the second construction, referred to as {\em dual set}, will prove to be relevant for the MsHHO method.

\subsection{Basis functions} \label{sse:basis}
 
\subsubsection{Polynomial basis functions}

Let $q\in\mathbb{N}$. We denote by $n^q_l$ the dimension of the vector space of $l$-variate
polynomial functions of total degree up to $q$. For any cell $K\in\TT_H$,
let $\{\psi^{q,K}_{i}\}_{1\leq i\leq n^q_d}$ be a basis of
$\PP^q(K)$, and for any face $F\in\FF_H$, let
$\{\psi^{q,F}_{j}\}_{1\leq j\leq n^q_{d{-}1}}$ be a basis of
$\PP^q(F)$.
With the choice of degree $q\defi k-1$ in the cell and degree $q\defi k$ 
on the faces, we henceforth drop
the corresponding superscripts in the polynomial basis functions
to alleviate the notation. For convenience, we assume that 
$\psi^{K}_{1}\equiv 1$; this assumption will be useful in the MHM setting.

\subsubsection{Primal basis functions}

For $K\in\TT_H$, we locally construct the set of primal basis functions for $\calU^{k}(K)$.
Regarding the cell-based basis functions, we set $\phi^{\pb,K}_{1}\equiv 1$, and for all $2\leq i\leq n^{k-1}_d$, we define $\phi^{\pb,K}_i$ as the unique function in $H^1(K)^\perp$ solving the following well-posed Neumann problem:
\begin{equation}
\label{phiK}
\left\{\begin{aligned}
-\div(\mathbb{A}\grad \phi^{\pb,K}_i)&=\psi^{K}_{i}-\Pi^0_K(\psi^{K}_{i})\;\text{in $K$}\,, \\
\mathbb{A}\grad\phi^{\pb,K}_i\cdot\nn_K&=0\;\text{on $\partial K$}\,.
\end{aligned}\right.
\end{equation}
Concerning the face-based basis functions, for all $F\in\FF_K$ and all $1\leq j\leq n^{k}_{d{-}1}$, we define $\phi^{\pb,K}_{F,j}$ as the unique function in $H^1(K)^\perp$ solving the following well-posed Neumann problem:
\begin{equation}
\label{phiF}
\left\{\begin{aligned}
  -\div(\mathbb{A}\grad\phi^{\pb,K}_{F,j}) & =-\frac{1}{|K|}(\psi^F_{j},1)_F\;\text{in $K$}\,, \\
  \mathbb{A}\grad\phi^{\pb,K}_{F,j}\cdot\nn_{K,F} & =\psi^{F}_{j}\;\text{on $F$}\quad\text{and}\quad \mathbb{A}\grad\phi^{\pb,K}_{F,j}\cdot\nn_{K,\sigma}=0\;\text{on $\sigma\in\FF_K\setminus\{F\}$}\,.
\end{aligned}\right.
\end{equation}
Then, for all $v\in\calU^{k}(K)$, setting
\begin{enumerate}
\item[(i)] $-\div(\mathbb{A}\grad v)\defi g_K=g_{K,1}+\sum_{i=2}^{n^{k-1}_d}g_{K,i}\psi^{K}_{i}\in\PP^{k-1}(K)$ (recall that $\psi^{K}_{1}\equiv 1$)\,,
\item[(ii)] $\mathbb{A}\grad v\,|_{\partial K}\cdot\nn_K\defi\mu_{\FF_K}\in \PP^{k}(\FF_K)$ with $\mu_{\FF_K\mid F}=\sum_{j=1}^{n^k_{d-1}}\mu_{F,j}\psi^F_{j}$ for all $F\in\FF_K$,
\item[(iii)] $\Pi^0_K(v)\defi v^0_K\in \PP^0(K)$\,,
\end{enumerate} 
with $(g_K,1)_K+(\mu_{\FF_K},1)_{\partial K}=0$, we have
\begin{gather}
v=v^0_K+\sum_{F\in\FF_K}\sum_{j=1}^{n^{k}_{d{-}1}}\mu_{F,j}\phi^{\pb,K}_{F,j}+\sum_{i=2}^{n^{k-1}_d}g_{K,i}\phi^{\pb,K}_i\,.
\end{gather}

A set of global basis functions for the space $\calU^k(\TT_H)\cap\calV(\TT_H;\ddiv,\Omega)$ is given by
\begin{gather*}
\{\widetilde{\phi}^{\pb,K}_{i}\}_{K\in\TT_H,1\leq i\leq n^{k-1}_d}\cup\{\widetilde{\phi}^{\pb,F}_{j}\}_{F\in\FF_H,1\leq j\leq n^k_{d{-}1}}\,,
\end{gather*}
where for each cell $K\in\TT_H$,
\begin{gather}
\widetilde{\phi}^{\pb,K}_{i}\,|_{K}=\phi^{\pb,K}_{i}\quad\text{and}\quad\widetilde{\phi}^{\pb,K}_{i}\,|_{\Omega\setminus\overline{K}}=0\,,
\end{gather}
for each interface $F\subseteq\partial K_+\cap\partial K_-$, 
\begin{gather}
\widetilde{\phi}^{\pb,F}_{j}\,|_{K_\pm}=(\nn_{K_\pm,F}\cdot\nn_F)\phi^{\pb,K_\pm}_{F,j}\quad\text{and}\quad\widetilde{\phi}^{\pb,F}_{j}\,|_{\Omega\setminus\overline{K_+\cup K_-}}=0\,,
\end{gather}
and for each boundary face $F\subseteq\partial K\cap\partial\Omega$,
\begin{gather}
\widetilde{\phi}^{\pb,F}_{j}\,|_{K}=\phi^{\pb,K}_{F,j}\quad\text{and}\quad\widetilde{\phi}^{\pb,F}_{j}\,|_{\Omega\setminus\overline{K}}=0\,.
\end{gather}

\begin{remark}[Link to lifting operators] \label{rem:lift}
Recall the local lifting operators $T_K\uN,T_K\us$ and their global counterparts $T\uN,T\us$ 
introduced in Section~\ref{sec:mhm}. For all $K\in\TT_H$, one readily verifies that
\begin{equation}
\phi^{\pb,K}_i = T_K\us(\psi_i^K), \qquad \phi^{\pb,K}_{F,j} = T_K\uN(E_F^{\dK}(\psi_j^F)),
\end{equation}
where the first identity holds for all $2\le i\le n_d^{k-1}$ and the
second identity holds for all $F\in\FF_K$ and all $1\le j\le n_{d-1}^k$, where $E_F^{\dK}$ 
denotes the zero-extension operator from $F$ to $\dK$. For the global basis functions, we have
\begin{equation}
\widetilde{\phi}^{\pb,K}_i = T\us(E_K^\Omega(\psi_i^K)), \qquad
\widetilde{\phi}^{\pb,F}_j = T\uN(E_F^{\dT_H}(\psi_j^F)),
\end{equation}
where $E_K^\Omega$ denotes the zero-extension operator from $K$ to $\Omega$, and
$E_F^{\dT_H}(\psi_j^F)\,|_{\dK} \defi E_F^{\partial K} (\psi_j^F(\nn_{K,F}\cdot\nn_F))$ if $F\in\FF_K$
and $E_F^{\dT_H}(\psi_j^F)\,|_{\dK} \defi 0$ otherwise, for all $K\in\TT_H$.
\end{remark}

\begin{remark}[Energy minimization]\label{rem:energy}
Consider the local energy functional
$J_K\,:\,H^1(K)\to\mathbb{R}_+$ such that 
$\varphi\mapsto\frac{1}{2}(\mathbb{A}\grad\varphi,\grad\varphi)_K$.
Then, one can characterize $\phi^{\pb,K}_i$ for all $2\leq i\leq n^{k-1}_d$ as follows:
\begin{equation} \label{eq:pb.c}
  \phi^{\pb,K}_i=\arg\min_{\varphi\in H^1(K)^\perp}\left(J_K(\varphi)-\big(\psi^{K}_{i}-\Pi^0_K(\psi^{K}_{i}),\varphi\big)_K\right)\,,
\end{equation}
and one can characterize $\phi^{\pb,K}_{F,j}$ for all $F\in\FF_K$ and all $1\leq j\leq n^{k}_{d{-}1}$ as follows:
\begin{equation} \label{eq:pb.f}
  \phi^{\pb,K}_{F,j}=\arg\min_{\varphi\in H^1(K)^\perp}\left(J_K(\varphi)-(\psi^F_{j},\varphi)_F+\frac{1}{|K|}(\psi^F_{j},1)_F(\varphi,1)_K\right)\,,
\end{equation}
where we recall that $H^1(K)^\perp \defi \left\{v\in H^1(K)\,:\,(v,1)_K=0\right\}$.
\end{remark}

\subsubsection{Dual basis functions}

For $K\in\TT_H$, we locally construct the set of dual basis functions for $\calU^{k}(K)$.
For this purpose, we rely on the fact that the triple $(K,\calU^k(K),\hSigma_K)$ is a 
finite element (see~\eqref{eq:triple_MsHHO}). 
For all $1\leq i\leq n^{k{-}1}_d$, the cell-based basis functions $\phi^{\db,K}_{i}\in\calU^k(K)$ are obtained by 
requiring that 
\begin{equation}  
\label{phiKd}
\Pi_K^{k{-}1}(\phi^{\db,K}_{i})=\psi^{K}_{i}\,,\quad\Pi_{\FF_K}^{k}(\phi^{\db,K}_{i})=0\,,
\end{equation}
that is, we have $\phi^{\db,K}_{i} \defi r_K((\psi^{K}_{i},0))$.
Moreover, for all $F\in\FF_K$ and all $1\le j\le n_{d-1}^k$, the face-based basis functions $\phi^{\db,K}_{F,j}\in\calU^k(K)$ are obtained by requiring that
\begin{equation}
\label{phiFd}
\Pi_K^{k{-}1}(\phi^{\db,K}_{F,j})=0,\quad\Pi_F^{k}(\phi^{\db,K}_{F,j})=\psi^{F}_{j},\quad\Pi_\sigma^{k}(\phi^{\db,K}_{F,j})=0\text{ for all }\sigma\in\FF_K\setminus\{F\}\,,
\end{equation}
that is, we have $\phi^{\db,K}_{F,j}\defi r_K((0,E_F^{\partial K}(\psi_j^F)))$.
Then, for all $v\in\calU^k(K)$, setting
\begin{enumerate}
\item[(i)] $\Pi_K^{k{-}1}(v)\defi v_K=\sum_{i=1}^{n^{k{-}1}_d} v_{K,i}\psi^{K}_{i}\in\PP^{k{-}1}(K)$,
\item[(ii)] $\Pi_{\FF_K}^k(v)\defi v_{\FF_K}\in\PP^{k}(\FF_K)$ with
$v_{\FF_K\mid F}= \sum_{j=1}^{n^k_{d{-}1}} v_{F,j}\psi^F_{j}$ for all $F\in\FF_K$,
\end{enumerate}
we have
\begin{gather}
v=\sum_{i=1}^{n^{k{-}1}_d} v_{K,i}\phi^{\db,K}_{i}+\sum_{F\in\FF_K}\sum_{j=1}^{n^k_{d{-}1}} v_{F,j}\phi^{\db,K}_{F,j}\,.
\end{gather}
Notice that we also have $v=r_K(\hv_K)$ where 
$\hv_K\defi(v_K,v_{\FF_K})\in\hU_K^k$.

A set of global basis functions for the space $\calU^k(\TT_H)\cap\tH^{1,k}_0(\TT_H)$ is given by
\begin{gather}\label{bas.da}
\{\widetilde{\phi}^{\db,K}_{i}\}_{K\in\TT_H,1\leq i\leq n^{k{-}1}_d}\cup\{\widetilde{\phi}^{\db,F}_{j}\}_{F\in\FFi_H,1\leq j\leq n^k_{d{-}1}}\,,
\end{gather}
where for each cell $K\in\TT_H$,
\begin{gather}\label{bas.db}
\widetilde{\phi}^{\db,K}_{i}\,|_{K}=\phi^{\db,K}_{i}\quad\text{and}\quad\widetilde{\phi}^{\db,K}_{i}\,|_{\Omega\setminus\overline{K}}=0\,,
\end{gather}
and for each interface $F\subseteq\partial K_+\cap\partial K_-$,
\begin{gather}\label{bas.dc}
\widetilde{\phi}^{\db,F}_{j}\,|_{K_\pm}=\phi^{\db,K_\pm}_{F,j}\quad\text{and}\quad\widetilde{\phi}^{\db,F}_{j}\,|_{\Omega\setminus\overline{K_+\cup K_-}}=0\,.
\end{gather}

\begin{remark}[Energy minimization] \label{re:ener}
Recall the local energy functional $J_K$ defined in Remark~\ref{rem:energy}.
Then, one can characterize $\phi^{\db,K}_i$ for all $1\leq i\leq n^{k-1}_d$ as follows:
\begin{equation} \label{eq:db.c}
  \phi^{\db,K}_{i}\defi\arg\min_{\varphi\in H^K_i}J_K(\varphi)\,,
\end{equation}
where $H^K_i\defi \left\{v\in H^K\,:\,\Pi_K^{k{-}1}(v)=\psi^{K}_{i}\right\}$ is a 
nonempty, convex, closed subset of the Hilbert space
$H^K\defi\left\{v\in H^1(K)\,:\,\Pi^{k}_{\FF_K}(v)=0\right\}$. 
This means that $\phi^{\db,K}_{i}\in H^1(K)$ is obtained by
solving the following saddle-point problem with dual unknowns 
$\gamma^K_i\in\PP^{k{-}1}(K)$ and $\mu^{\dK}_i\in\PP^{k}(\FF_K)$ such that $(\gamma^K_i,1)_K+(\mu^{\dK}_i,1)_{\partial K}=0$:
\begin{equation}  \label{comp.dK}
\left\{\begin{aligned}
&-\div(\mathbb{A}\grad\phi^{\db,K}_{i})=\gamma^K_i\;\text{ in $K$}\,,\quad\mathbb{A}\grad\phi^{\db,K}_{i}\cdot\nn_K=\mu^{\dK}_i\text{ on }\partial K\,,\\
&\Pi_K^{k{-}1} (\phi^{\db,K}_{i})=\psi^{K}_{i}\,,\quad\Pi_{\FF_K}^{k}(\phi^{\db,K}_{i})=0\,.
\end{aligned}\right.
\end{equation}
Similarly, one can characterize $\phi^{\db,K}_{F,j}$ for all $F\in\FF_K$ and all $1\leq j\leq n^{k}_{d-1}$ as follows:
\begin{equation} 
\label{eq:db.f}
  \phi^{\db,K}_{F,j}\defi\arg\min_{\varphi\in H^K_{F,j}}J_K(\varphi)\,,
\end{equation}
where $H^K_{F,j}\defi \left\{v\in H^K_F\,:\,\Pi_F^{k}(v)=\psi^F_{j}\right\}$
is a 
nonempty, convex, closed subset of the Hilbert space $H^K_F\defi\left\{v\in H^1(K)\,:\,\Pi_K^{k{-}1}(v)= 0\text{ and } \Pi_\sigma^{k}(v)=0\quad\forall\,\sigma\in\FF_K\setminus\{F\}\right\}$. This means that $\phi^{\db,K}_{F,j}\in H^1(K)$ is obtained by 
solving the following saddle-point problem with dual unknowns 
$\gamma^K_{F,j}\in\PP^{k{-}1}(K)$ and $\mu^{\dK}_{F,j}\in\PP^{k}(\FF_K)$ such that $(\gamma^K_{F,j},1)_K+(\mu^{\dK}_{F,j},1)_{\partial K}=0$:
\begin{equation} \label{comp.dFj}
\left\{\begin{aligned}
&-\div(\mathbb{A}\grad\phi^{\db,K}_{F,j})=\gamma^K_{F,j}\;\text{ in $K$},\quad\mathbb{A}\grad\phi^{\db,K}_{F,j}\,\cdot\nn_K=\mu^{\dK}_{F,j}\text{ on }\partial K\,,\\ 
&\Pi_K^{k{-}1}(\phi^{\db,K}_{F,j})=0, \quad\Pi_F^{k}(\phi^{\db,K}_{F,j})=\psi^{F}_{j},\quad\Pi_\sigma^{k}(\phi^{\db,K}_{F,j})=0\text{ for all }\sigma\in\FF_K\setminus\{F\}\,.
\end{aligned}\right.
\end{equation}
\end{remark}

\subsection{Offline-online strategy}

In view of Section~\ref{sse:basis}, primal basis functions, as they globally span $\calU^k(\TT_H)\cap\calV(\TT_H;\ddiv,\Omega)$, appear to be naturally suited to the MHM framework. On the other hand, dual basis functions, as they globally span $\calU^k(\TT_H)\cap\tH^{1,k}_0(\TT_H)$, appear to be naturally suited to the MsHHO framework (cf.~Remark~\ref{re:heu}).
In this section, we detail how the MHM and MsHHO computations can be optimally organized using an offline-online strategy.
This type of organization of the computations is particularly relevant in a multi-query context, in which the solution has to be computed for a large amount of data, so that it is crucial to
pre-process as many data-independent quantities as possible in an offline stage, while keeping the size of the online system to its minimum. We focus in the sequel on the situation where many instances of the source term $f$ are considered (we could also consider the case of multiple boundary data).

\subsubsection{The MHM case} \label{ssse:MHM}

By Remark~\ref{rem:lift}, the (fully explicit) MHM solution $u_H\uMHM\in\calU^k(\TT_H)\cap\mathcal{V}(\TT_H;{\rm div},\Omega)$ defined by~\eqref{mhm-sol-alt} with $m\defi k-1$, where the pair $(u^0_H,\lambda_H) \in \PP^0(\TT_H) \times \Lambda^k(\dT_H)$ solves \eqref{mhm-alt}, writes
\begin{gather}
\label{uMHM}
u_H\uMHM=\sum_{K\in\TT_H}u^0_K\widetilde{\phi}^{\pb,K}_{1}+\sum_{F\in\FF_H}\sum_{j=1}^{n^k_{d{-}1}}\lambda_{F,j}\widetilde{\phi}^{\pb,F}_{j}+\sum_{K\in\TT_H} \sum_{i=2}^{n^{k-1}_d}f_{K,i}\widetilde\phi^{\pb,K}_{i},
\end{gather}
where $u^0_K\defi u_{H\mid K}^0= \Pi_K^0(u_H\uMHM)$ for all $K\in\TT_H$, $\lambda_{F,j}$ is defined, for all $F\in\FF_H$, as the $j^{\text{th}}$ coefficient of $\lambda_{H\mid F}$ on the basis $\{\psi^F_{j}\}_{1\leq j\leq n^k_{d{-}1}}$, and 
$f_{K,i}$ stands for the $i^{\text{th}}$ coefficient of $\Pi_K^{k-1}(f_K)$ on the basis $\{\psi^{K}_{i}\}_{1\leq i\leq n^{k-1}_{d}}$.
This motivates the following offline-online decomposition of the computations:

\medskip
\noindent
\underline{Offline stage:} For each $K\in\TT_H$:
\begin{itemize}
\item[$(1)$] Compute the basis functions 
$\phi^{\pb,K}_{i}$ from \eqref{phiK}, for  all $i=2,\ldots,n^{k-1}_{d}$;
\item[$(2)$] Compute the basis functions 
$\phi^{\pb,K}_{F,j}$ from \eqref{phiF}, for all $F\in\FF_K$ and all $j=1,\ldots,n^k_{d{-}1}$.
\end{itemize}
\underline{Online stage:}
\begin{itemize}
\item[$(3)$] Compute the vector $\left(f_{K,i}\right)_{K\in \TT_H}^{i=1,\ldots,n^{k-1}_{d}}$ by solving the local symmetric positive-definite (SPD) systems
  \begin{equation*}
    \sum_{i=1}^{n_d^{k-1}}f_{K,i}(\psi^K_i,\psi^K_j)_K=(f_K,\psi^K_j)_K\,,
  \end{equation*}
  for all $j=1,\ldots,n_d^{k-1}$, and all $K\in\TT_H$;
\item[$(4)$] Compute the vectors $\left(u_K^0\right)_{K\in\TT_H}$ and $\left(\lambda_{F,j}\right)_{F\in \FF_H}^{j=1,\ldots,n^k_{d{-}1}}$ by solving the global saddle-point problem
\begin{equation*}
\sum_{F\in\FF_K}\sum_{j=1}^{n^k_{d{-}1}}\lambda_{F,j}(\psi^{F}_{j},1)_{F}  = -(f_K,1)_K\,,
\end{equation*}
for all $K\in\TT_H$, and (recall that $\phi^{\pb,K}_{1}\equiv 1$ and that $(\phi^{\pb,K}_{F',j'},1)_{K}=0$)
\begin{equation*}
\sum_{K\in\TT_{F'}}u^0_K(\psi^{F'}_{j'},1)_{F'} + \sum_{K\in\TT_{F'}}\sum_{F\in\FF_K}\sum_{j=1}^{n^k_{d{-}1}}\lambda_{F,j}(\psi^{F'}_{j'},\widetilde{\phi}^{\pb,F}_{j\mid K})_{F'}  = -\sum_{K\in\TT_{F'}}\sum_{i=2}^{n^{k-1}_d}f_{K,i}(\psi^{K}_{i},\phi^{\pb,K}_{F',j'})_{K}\,,
\end{equation*}
for all $j'=1,\ldots,n^k_{d{-}1}$, and all $F'\in \FF_H$ with $\TT_{F'}\defi\{K_+,K_-\}$ if $F'\in\FFi_H$ and $\TT_{F'}\defi\{K\}$ if $F'\in\FFe_H$;
\item[$(5)$] Form $u_H\uMHM$ using \eqref{uMHM}.
\end{itemize}

\begin{remark}[Mono-query case] \label{rem:mono} In a mono-query scenario, in which the solution to the discrete problem is only needed for one (or a few) source term(s), one can advantageously consider an amended version of~\eqref{uMHM}, where the last term in the decomposition is simply replaced by $T\us(\Pi^{k-1}_H(f))$. From a practical point of view, the step (1) above can be bypassed, and replaced by solving, inbetween steps (3) and (4), Problem~\eqref{phiK} for all $K\in\TT_H$ with right-hand side $\Pi^{k-1}_K(f_K)$ (in place of $\psi_i^K$), whose solution is precisely $T\us_K(\Pi^{k-1}_K(f_K))$.
\end{remark}

\subsubsection{The MsHHO case}

The solution $u_H\uHHO\in\calU^k(\TT_H)\cap\tH^{1,k}_0(\TT_H)$ to Problem~\eqref{eq:MsHHO_equiv} writes
\begin{gather}
\label{uMsHHO}
u_H\uHHO=\sum_{K\in\TT_H}\sum_{i=1}^{n^{k{-}1}_d} u_{K,i}\widetilde{\phi}^{\db,K}_{i}+\sum_{F\in\FFi_H}\sum_{j=1}^{n^k_{d{-}1}} u_{F,j}\widetilde{\phi}^{\db,F}_{j}\,,
\end{gather}
where $u_{K,i}$ is defined as the $i^{\text{th}}$ coefficient of $u_K\defi \Pi_K^{k-1}(u_H\uHHO)$ on the basis $\{\psi^{K}_{i}\}_{1\leq i\leq n^{k{-}1}_d}$ for all $K\in\TT_H$, and $u_{F,j}$ as the $j^{\text{th}}$ coefficient of $u_F\defi \Pi^k_F(u_H\uHHO)$ on the basis $\{\psi^F_{j}\}_{1\leq j\leq n^k_{d{-}1}}$ for all $F\in\FFi_H$ (recall that $\Pi_F^k(u_H\uHHO)=0$ for all $F\in \FFe_H$). This, combined with the equivalent formulation~\eqref{form_flux} of the MsHHO method, and Remark~\ref{re:ener} (recall, in particular, the notation introduced therein), motivates the following offline-online decomposition of the computations:

\medskip
\noindent
\underline{Offline stage:} For each $K\in\TT_H$:
\begin{itemize}
\item[$(1)$] Compute the basis functions $\phi^{\db,K}_{i}$ from \eqref{comp.dK}, for all  $i=1,\ldots,n^{k-1}_{d}$;
\item[$(2)$] Compute the basis functions $\phi^{\db,K}_{F,j}$  from \eqref{comp.dFj}, for all $F\in\FF_K$ and all $j=1,\ldots,n^k_{d{-}1}$.
\end{itemize}
  Define
    \begin{itemize}
      \item[$\bullet$] the $n_d^{k-1}\times n_d^{k-1}$ matrix $\mathbb{G}^{KK}$, whose column $1\leq i\leq n^{k-1}_d$ is formed by the $n^{k-1}_d$ coefficients of the decomposition of $\gamma_i^K\in\mathbb{P}^{k-1}(K)$ on the basis $\{\psi^K_{i'}\}_{1\leq i'\leq n^{k-1}_d}$;
      \item[$\bullet$] for each $F\in\FF_K$, the $n^{k-1}_d\times n^k_{d-1}$ matrix $\mathbb{G}^{KF}$, whose column $1\leq j\leq n^{k}_{d-1}$ is formed by the $n^{k-1}_d$ coefficients of the decomposition of $\gamma^K_{F,j}\in\mathbb{P}^{k-1}(K)$ on the basis $\{\psi^K_i\}_{1\leq i\leq n^{k-1}_d}$;
      \item[$\bullet$] for each $F\in\FF_K$, the $n^{k}_{d-1}\times n^{k-1}_{d}$ matrix $\mathbb{M}^{FK}$, whose column $1\leq i\leq n^{k-1}_{d}$ is formed by the $n^{k}_{d-1}$ coefficients of the decomposition of $\mu^{\partial K}_{i\mid F}\in\mathbb{P}^{k}(F)$ on the basis $\{\psi^F_j\}_{1\leq j\leq n^{k}_{d-1}}$;
      \item[$\bullet$] for each $F,F'\in\FF_K$, the $n^{k}_{d-1}\times n^{k}_{d-1}$ matrix $\mathbb{M}^{F'F}$, whose column $1\leq j\leq n^{k}_{d-1}$ is formed by the $n^{k}_{d-1}$ coefficients of the decomposition of $\mu^{\partial K}_{F,j\mid F'}\in\mathbb{P}^{k}(F')$ on the basis $\{\psi^{F'}_{j'}\}_{1\leq j'\leq n^{k}_{d-1}}$;
    \end{itemize}
    \begin{itemize}
      \item[$(3)$] Invert the matrix $\mathbb{G}^{KK}$.
\end{itemize}
\underline{Online stage:}
\begin{itemize}
  \item[$(4)$] Compute the vectors $\left(\boldsymbol{f}_K\right)_{K\in\TT_H}\defi\left(f_{K,i}\right)_{K\in \TT_H}^{i=1,\ldots,n^{k-1}_{d}}$ by solving the local SPD systems
    \begin{equation*}
      \sum_{i=1}^{n_d^{k-1}}f_{K,i}(\psi^K_i,\psi^K_j)_K=(f_K,\psi^K_j)_K\,,
    \end{equation*}
    for all $j=1,\ldots,n_d^{k-1}$, and all $K\in\TT_H$;
  \item[$(5)$]  Compute the vectors $\left(\boldsymbol{u}_F\right)_{F\in\FFi_H}\defi\left(u_{F,j}\right)_{F\in \FFi_H}^{j=1,\ldots,n^k_{d{-}1}}$ by solving the global SPD problem
\begin{equation*}
\begin{aligned}
\sum_{K\in\TT_{F'}}\sum_{F\in\FF_K\cap\FFi_H}\big(\mathbb{M}^{F'F}-\mathbb{M}^{F'K}[\mathbb{G}^{KK}]^{-1}\mathbb{G}^{KF}\big)\boldsymbol{u}_{F}=-\sum_{K\in\TT_{F'}}\mathbb{M}^{F'K}[\mathbb{G}^{KK}]^{-1}\boldsymbol{f}_K,
\end{aligned}
\end{equation*}
for all $F'\in\FFi_H$;
\item[$(6)$] Reconstruct locally the vectors $\left(\boldsymbol{u}_K\right)_{K\in\TT_H}\defi\left(u_{K,i}\right)_{K\in \TT_H}^{i=1,\ldots,n^{k-1}_{d}}$: for all $K\in\TT_H$,
  \begin{equation*}
    \boldsymbol{u}_K=[\mathbb{G}^{KK}]^{-1}\bigg(\boldsymbol{f}_K-\sum_{F\in\FF_K\cap\FFi_H}\mathbb{G}^{KF}\boldsymbol{u}_F\bigg);
  \end{equation*}
\item[$(7)$] Form $u_H\uHHO$ using \eqref{uMsHHO}.
\end{itemize}

\subsubsection{Purely face-based MsHHO method} \label{sec:face-based}
    Using the (primal-dual) local set of basis functions for $\calU^k(K)$, $K\in\TT_H$, introduced in~\cite[Sec.~4.1]{CiErL:19a} (but not fully exploited therein), the MsHHO method can be alternatively defined as a purely face-based method, i.e.~without using cell unknowns. To see this, let $K\in\TT_H$, and recall the local energy functional $J_K$ defined in Remark~\ref{rem:energy}. Define $\phi_i^K$ for all $1\leq i\leq n^{k-1}_d$ as follows:
    \begin{equation} \label{eq:b.c}
      \phi_{i}^K\defi\arg\min_{\varphi\in H^K}\left(J_K(\varphi)-(\psi^{K}_{i},\varphi)_K\right)\,,
    \end{equation}
    where the space $H^K$ is defined in Remark~\ref{re:ener}. Equivalently, $\phi^K_i\in H^1(K)$ is obtained by solving the following saddle-point problem with dual unknown $\mu^{\dK}_i\in\PP^{k}(\FF_K)$ such that $(\psi^K_i,1)_K+(\mu^{\dK}_i,1)_{\partial K}=0$:
\begin{equation}  \label{comp.b.c}
\left\{\begin{aligned}
&-\div(\mathbb{A}\grad\phi^{K}_{i})=\psi^K_i\;\text{ in $K$}\,,\quad\mathbb{A}\grad\phi^{K}_{i}\cdot\nn_K=\mu^{\dK}_i\text{ on }\partial K\,,\\
&\Pi_{\FF_K}^{k}(\phi^{K}_{i})=0\,.
\end{aligned}\right.
\end{equation}
  Similarly, define $\phi^{K}_{F,j}$ for all $F\in\FF_K$ and all $1\leq j\leq n^{k}_{d-1}$ as follows:
\begin{equation} 
\label{eq:b.f}
  \phi^{K}_{F,j}\defi\arg\min_{\varphi\in H^K_{F,j}}J_K(\varphi)\,,
\end{equation}
where $H^K_{F,j}\defi \left\{v\in H^K_F\,:\,\Pi_F^{k}(v)=\psi^F_{j}\right\}$ as in Remark~\ref{re:ener}, but now we set $H^K_F\defi\big\{v\in H^1(K)\,:\,\Pi_\sigma^{k}(v)=0\quad\forall\,\sigma\in\FF_K\setminus\{F\}\big\}$. Equivalently, $\phi^{K}_{F,j}\in H^1(K)$ is obtained by solving the following saddle-point problem with dual unknown $\mu^{\dK}_{F,j}\in\PP^{k}(\FF_K)$ such that $(\mu^{\dK}_{F,j},1)_{\partial K}=0$:
\begin{equation} \label{comp.db.f}
\left\{\begin{aligned}
&-\div(\mathbb{A}\grad\phi^{K}_{F,j})=0\;\text{ in $K$},\quad\mathbb{A}\grad\phi^{K}_{F,j}\,\cdot\nn_K=\mu^{\dK}_{F,j}\text{ on }\partial K\,,\\ 
&\Pi_F^{k}(\phi^{K}_{F,j})=\psi^{F}_{j},\quad\Pi_\sigma^{k}(\phi^{K}_{F,j})=0\text{ for all }\sigma\in\FF_K\setminus\{F\}\,.
\end{aligned}\right.
\end{equation}
  For all $v\in\calU^k(K)$, setting
  \begin{enumerate}
\item[(i)] $-\div(\mathbb{A}\grad v)\defi g_K=\sum_{i=1}^{n^{k-1}_d}g_{K,i}\psi^K_i\in\PP^{k-1}(K)$,
\item[(ii)] $\Pi_{\FF_K}^k(v)\defi v_{\FF_K}\in\PP^{k}(\FF_K)$ with
$v_{\FF_K\mid F}= \sum_{j=1}^{n^k_{d{-}1}} v_{F,j}\psi^F_{j}$ for all $F\in\FF_K$,
  \end{enumerate}
  we then have
  \begin{gather}
v=\sum_{i=1}^{n^{k-1}_d}g_{K,i}\phi^{K}_i+\sum_{F\in\FF_K}\sum_{j=1}^{n^{k}_{d{-}1}}v_{F,j}\phi^{K}_{F,j}\,.
  \end{gather}
  As we did for the dual set of basis functions in~\eqref{bas.da}--\eqref{bas.db}--\eqref{bas.dc}, we can easily construct a set of global basis functions $\{\widetilde{\phi}^{K}_{i}\}_{K\in\TT_H,1\leq i\leq n^{k-1}_d}\cup\{\widetilde{\phi}^{F}_{j}\}_{F\in\FFi_H,1\leq j\leq n^{k}_{d-1}}$ for the space $\calU^k(\TT_H)\cap\tH^{1,k}_0(\TT_H)$. The solution $u_H\uHHO\in\calU^k(\TT_H)\cap\tH^{1,k}_0(\TT_H)$ to Problem~\eqref{eq:MsHHO_equiv} then writes
\begin{gather}
\label{uMsHHO.alt}
u_H\uHHO=\sum_{K\in\TT_H}\sum_{i=1}^{n^{k{-}1}_d} f_{K,i}\widetilde{\phi}^{K}_{i}+\sum_{F\in\FFi_H}\sum_{j=1}^{n^k_{d{-}1}} u_{F,j}\widetilde{\phi}^{F}_{j}\,,
\end{gather}
where $f_{K,i}$ is defined as the $i^{\text{th}}$ coefficient of $\Pi_K^{k-1}(f_K)$ on the basis $\{\psi^{K}_{i}\}_{1\leq i\leq n^{k{-}1}_d}$ for any $K\in\TT_H$, and $u_{F,j}$ as the $j^{\text{th}}$ coefficient of $u_F\defi \Pi^k_F(u_H\uHHO)$ on the basis $\{\psi^F_{j}\}_{1\leq j\leq n^k_{d{-}1}}$ for any $F\in\FFi_H$. The new decomposition~\eqref{uMsHHO.alt} leads to a simplification of the offline-online solution strategy. In the offline stage, the static condensation step (3) can be bypassed. Also, the steps (1) and (2), which consist in solving saddle-point problems of the form~\eqref{comp.b.c} and~\eqref{comp.db.f}, are a bit less expensive than before, as the number of Lagrange multipliers is decreased. In the online stage, the reconstruction step (6) can be bypassed, and the global problem to solve in the step (5) simplifies to finding $\left(\boldsymbol{u}_F\right)_{F\in\FFi_H}\defi\left(u_{F,j}\right)_{F\in \FFi_H}^{j=1,\ldots,n^k_{d{-}1}}$ such that
\begin{equation} \label{sys}
\begin{aligned}
\sum_{K\in\TT_{F'}}\sum_{F\in\FF_K\cap\FFi_H}\mathbb{M}^{F'F}\boldsymbol{u}_{F}=-\sum_{K\in\TT_{F'}}\mathbb{M}^{F'K}\boldsymbol{f}_K,
\end{aligned}
\end{equation}
for all $F'\in\FFi_H$.

  \begin{remark}[Mono-query case]
The purely face-based version of the MsHHO method is particularly suited to the mono-query context. In that case, the step (1) can be bypassed, and replaced by solving, inbetween steps (4) and (5), Problem~\eqref{comp.b.c} for all $K\in\TT_H$ with right-hand side $\Pi^{k-1}_K(f_K)$ (in place of $\psi^K_i$), whose solution is denoted $\phi_{f_K}^K$. Letting $\mu_{f_K}^{\partial K}$ be the corresponding dual unknown, one must then replace in~\eqref{sys} the vector $\mathbb{M}^{F'K}\boldsymbol{f}_K$ by the vector $\boldsymbol{\mu}^{\partial K}_{f_K,F'}\in\mathbb{R}^{n^k_{d-1}}$ formed by the coefficients of the decomposition of $\mu^{\partial K}_{f_K\mid F'}\in\mathbb{P}^k(F')$ on the basis $\{\psi_j^{F'}\}_{1\leq j\leq n^k_{d-1}}$. The MsHHO solution is now given by
\begin{equation}
  u_H\uHHO=\sum_{K\in\TT_H}\widetilde{\phi}^{K}_{f_K}+\sum_{F\in\FFi_H}\sum_{j=1}^{n^k_{d{-}1}} u_{F,j}\widetilde{\phi}^{F}_{j}\,,
\end{equation}
in place of~\eqref{uMsHHO.alt}.
\end{remark}

\subsubsection{Summary}

The following table summarizes the main computational aspects, in a multi-query context, for both the (fully explicit) MHM and MsHHO methods based on $\calU^k(\TT_H)$, $k\geq 1$, in both the offline and online stages, so as to provide to the reader a one-glance comparison of the two methods. For simplicity, we assume that all the mesh cells have the same number of faces, denoted by $n_{\partial}$.

\begin{center}\begin{table}[h!]\begin{tabular}{|l|l|l|l|}
\hline
MHM&offline&local SPD systems&$n_d^{k-1}-1 + n^k_{d-1}n_{\partial}$ problems per cell\\
&online&global saddle-point problem&$\#\TT_H+n^k_{d-1}\#\FF_H$ unknowns\\
\hline
MsHHO&offline&local saddle-point systems&$n^{k-1}_d+n^k_{d-1}n_{\partial}$ problems per cell\\
&online&global SPD problem&$n^k_{d-1}\#\FFi_H$ unknowns\\
\hline
\end{tabular}\caption{Comparison of MHM and MsHHO on the main computational aspects}\end{table}\end{center}

The offline stage is of course performed once and for all, independently of the data (here, the source term). In practice, for both methods, the approximation of 
the local problems can be computationally costly, but the fact that all problems are local makes of the offline stage an embarassingly parallel task. The offline stage can hence naturally benefit from parallel architectures. In the online stage, the linear systems to solve (for the different data) only attach unknowns to the coarse mesh at hand, hence the computational burden remains limited.

\begin{remark}[Other boundary conditions]
  The MHM and MsHHO methods easily adapt to the case of (nonhomogeneous) mixed Dirichlet--Neumann boundary conditions. If $\FF_H^{\rm D}\cup\FF^{\rm N}_H$ forms a (disjoint) partition of $\FFe_H$ into, respectively, Dirichlet and Neumann boundary faces, then the size of the online linear systems in the MHM method becomes $\#\TT_H+n^k_{d-1}\#(\FFi_H\cup\FF_H^{\rm D})$, whereas that for the MsHHO method becomes $n^k_{d-1}\#(\FFi_H\cup\FF_H^{\rm N})$.
\end{remark}

\begin{remark}[Second-level discretization and equivalence between two-level methods] \label{rem:sec_lev}
  Let $\mathcal{S}_h$ denote a matching simplicial submesh of $\TT_H$ of size $h\ll H$ ($\mathcal{S}_h$ can for example be obtained by further refining $\mathcal{S}_H$ from Section~\ref{sse:setting}). Consider, locally to any $K\in\TT_H$, a discretization of the second-level (Neumann) problems in the space $\calU^{m,k}(K_h)\cap\tH^{1,k}(K_h)$, where $K_h\defi\{T\}_{T\in\mathcal{S}_h,T\subset K}$. Then, using similar arguments as in the one-level case, one can prove the equivalence between the two-level MHM and MsHHO methods.
  Simple cases exist in which closed formulas for the second-level basis functions
    are available. For instance, if $T\in K_h$ is a simplex and $\mathbb{A}_{\mid T}$ is a constant matrix, we may cite the case
  $m=-1$ and $k=0$ for the MsHHO method where
  $\calU^{-1,0}(T)=\mathbb{P}^1(T)$, or the case $m=0$ and $k=0$ for the MHM/MsHHO methods
  where $\calU^{0,0}(T)$ corresponds to a proper subspace of $\mathbb{P}^2(T)$ if
  ${\mathbb A}_{\mid T}$ is isotropic (see \cite{HarParVal13}).
  Unfortunately, in general, even if $T\in K_h$
  is assumed to be a simplex and $\mathbb{A}_{\mid T}$ to be constant, closed-form expressions for basis functions in $\calU^{m,k}(T)$
  are not known.
  To recover equivalence for ready-to-use methods, one possibility is to write an HHO discretization of the second-level problems (as in~\cite{CiErL:19b}) and make the corresponding two-level MHM and MsHHO solutions coincide. In that case, the zero-jump condition on the normal flux at interfaces is imposed on a stabilized version of the normal flux (see~\cite{EfeLazShi15} for an example in the HDG setting). Notice that the subcells need not necessarily be simplices.
  It is also possible, at the price of equivalence, to preserve two-level $\HH(\ddiv,\Omega)$-conformity on the exact flux. This is the case in the MHM context as soon as a mixed method is used to approximate the second-level problems; see~\cite{DurDevGomVal19} (cf.~also \cite{VohWoh:13} for a similar idea in the context of mixed finite elements).
\end{remark}


\section{Conclusion}
\label{concl}

Although they originate from entirely different constructions, we have proved that the one-level
(original) semi-explicit  MHM method and the one-level MsHHO method provide the same numerical solution when the source term is piecewise polynomial on the (coarse) mesh, and this is also the case for the fully explicit MHM method and the MsHHO method for any source term in $L^2(\Omega)$. As a byproduct, we have proposed a unified convergence analysis, as well as improved versions
of the two methods. More precisely, we have introduced a version of the MHM method that is
prompt to be used in a multi-query context, and a version of the MsHHO method
that only uses face unknowns.

\bibliographystyle{amsplain}
\bibliography{mhmhho}

\end{document}